\def\ssign{\textsection\nobreak\hspace{1pt plus 0.3pt}}
\let\origsection=\section 
\def\mysection{\@mystartsection{section}{1}\z@{.7\linespacing\@plus\linespacing}{.5\linespacing}{\normalfont\scshape\centering\ssign}}
\def\section{\@ifstar{\origsection*}{\mysection}}
\def\appendix{\par\c@section\z@ \c@subsection\z@
   \let\sectionname\appendixname
   \let\section=\origsection 
   \def\thesection{\@Alph\c@section}} 
\def\@mystartsection#1#2#3#4#5#6{\if@noskipsec \leavevmode \fi 
 \par \@tempskipa #4\relax
 \@afterindenttrue
 \ifdim \@tempskipa <\z@ \@tempskipa -\@tempskipa \@afterindentfalse\fi
 \if@nobreak \everypar{}\else
     \addpenalty\@secpenalty\addvspace\@tempskipa\fi
 \@dblarg{\@mysect{#1}{#2}{#3}{#4}{#5}{#6}}}
\def\@mysect#1#2#3#4#5#6[#7]#8{\edef\@toclevel{\ifnum#2=\@m 0\else\number#2\fi}\ifnum #2>\c@secnumdepth \let\@secnumber\@empty
  \else \@xp\let\@xp\@secnumber\csname the#1\endcsname\fi
  \@tempskipa #5\relax
  \ifnum #2>\c@secnumdepth
    \let\@svsec\@empty
  \else
    \refstepcounter{#1}\edef\@secnumpunct{\ifdim\@tempskipa>\z@ \@ifnotempty{#8}{\@nx\enspace}\else
        \@ifempty{#8}{.}{\@nx\enspace}\fi
    }\@ifempty{#8}{\ifnum #2=\tw@ \def\@secnumfont{\bfseries}\fi}{}\protected@edef\@svsec{\ifnum#2<\@m
        \@ifundefined{#1name}{}{\ignorespaces\csname #1name\endcsname\space
        }\fi
      \@seccntformat{#1}}\fi
  \ifdim \@tempskipa>\z@ \begingroup #6\relax
    \@hangfrom{\hskip #3\relax\@svsec}{\interlinepenalty\@M #8\par}\endgroup
    \ifnum#2>\@m \else \@tocwrite{#1}{#8}\fi
  \else
  \def\@svsechd{#6\hskip #3\@svsec
    \@ifnotempty{#8}{\ignorespaces#8\unskip
       \@addpunct.}\ifnum#2>\@m \else \@tocwrite{#1}{#8}\fi
  }\fi
  \global\@nobreaktrue
  \@xsect{#5}}
\renewcommand{\PrintDOI}[1]{\doi{#1}}
\numberwithin{equation}{section}
\numberwithin{figure}{section}
\def\rmlabel{\upshape({\kern-0.0833em\itshape\roman*\kern+0.0833em})}
\def\nlabel{\upshape(\kern-0.0833em{\itshape\arabic*}\kern0.0833em)}
\def\alabel{\upshape({\kern-0.0833em\itshape\alph*\kern0.0833em})}
\theoremstyle{plain}
\newtheorem{thm}{Theorem}[section]
\newtheorem{prop}[thm]{Proposition}
\newtheorem{cor}[thm]{Corollary}
\newtheorem{lemma}[thm]{Lemma}
\theoremstyle{definition}
\newtheorem{dfn}[thm]{Definition}
\newtheorem{quest}[thm]{Question}
\theoremstyle{remark}
\newtheorem{clm}[thm]{Claim}
\let\theta=\vartheta
\let\rho=\varrho
\let\phi=\varphi
\def\NN{\mathds N}
\def\ZZ{\mathds Z}
\def\fC{\mathfrak{C}}
\def\fF{\mathfrak{F}}
\def\ccF{\mathscr{F}}
\def\ccG{\mathscr{G}}
\def\ccH{\mathscr{H}}
\def\ccN{\mathscr{N}}
\def\ccS{\mathscr{S}}
\def\ccX{\mathscr{X}}
\def\Fs{F_{\star}}
\let\polishlcross=\l
\def\l{\ifmmode\ell\else\polishlcross\fi}
\def\moverlay{\mathpalette\mov@rlay}
\def\mov@rlay#1#2{\leavevmode\vtop{   \baselineskip\z@skip \lineskiplimit-\maxdimen
   \ialign{\hfil$\m@th#1##$\hfil\cr#2\crcr}}}
\newcommand{\charfusion}[3][\mathord]{
    #1{\ifx#1\mathop\vphantom{#2}\fi
        \mathpalette\mov@rlay{#2\cr#3}
      }
    \ifx#1\mathop\expandafter\displaylimits\fi}
\newcommand{\dcup}{\charfusion[\mathbin]{\cup}{\cdot}}
\def\tand{\ \text{and}\ }
\def\qand{\quad\text{and}\quad}
\def\qqand{\qquad\text{and}\qquad}
\newcommand{\mrhup}{\scaleobj{0.5}{\rightharpoonup}}
\newcommand{\mlhup}{\scaleobj{0.5}{\leftharpoonup}}
\newcommand{\mrelb}{\hstretch{0.5}{\scaleobj{0.5}{\relbar}}}
\def\rhfill@#1{$\m@th\thickmuskip0mu\medmuskip\thickmuskip\thinmuskip\thickmuskip
   \relax#1\mkern+2.5mu\cleaders\hbox{$#1\mrelb\mkern-1mu$}\hfill\mkern-6mu\mrhup\mkern+0.25mu$}
\def\lhfill@#1{$\m@th\thickmuskip0mu\medmuskip\thickmuskip\thinmuskip\thickmuskip
   \relax#1\mkern+1.5mu\mlhup \cleaders\hbox{$#1\mkern-2.5mu\mrelb$}\hfil\mkern+0.5mu$}
\DeclareRobustCommand{\overrightharpoon}{\mathpalette{\overarrow@\rhfill@}}
\DeclareRobustCommand{\overleftharpoon}{\mathpalette{\overarrow@\lhfill@}}
\let\vec=\overrightharpoon
\let\lvec=\overleftharpoon
\let\setminus=\smallsetminus
\let\emptyset=\varnothing
\let\vn=\varnothing
\let\lra=\longrightarrow
\DeclareSymbolFont{stmry}{U}{stmry}{m}{n}
\DeclareMathSymbol\arrownot\mathrel{stmry}{"58}
\DeclareMathSymbol\Arrownot\mathrel{stmry}{"59}
\def\longarrownot{\mathrel{\mkern5.5mu\arrownot\mkern-5.5mu}}
\def\nlra{\longarrownot\longrightarrow}
\def\nilra{\xrightarrow{\text{\,n.n.i.\,}}}
\def\longarrownotnni{\mathrel{\mkern10.5mu\arrownot\mkern-10.5mu}}
\def\ninlra{\longarrownotnni\xrightarrow{\text{\,n.n.i.\,}}}
\newcommand{\pushright}[1]{\ifmeasuring@#1\else\omit\hfill$\displaystyle#1$\fi\ignorespaces}
\newcommand{\pushleft}[1]{\ifmeasuring@#1\else\omit$\displaystyle#1$\hfill\fi\ignorespaces}
\DeclareFontFamily{U}  {MnSymbolC}{}
\DeclareSymbolFont{MnSyC}         {U}  {MnSymbolC}{m}{n}
\DeclareFontShape{U}{MnSymbolC}{m}{n}{
    <-6>  MnSymbolC5
   <6-7>  MnSymbolC6
   <7-8>  MnSymbolC7
   <8-9>  MnSymbolC8
   <9-10> MnSymbolC9
  <10-12> MnSymbolC10
  <12->   MnSymbolC12}{}
\DeclareMathSymbol{\powerset}{\mathord}{MnSyC}{180}
\newcommand{\hedge}[7]{

	\ifx\relax#4\relax
		\def\qoffs{0pt}
	\else
		\def\qoffs{#4}
	\fi

	\def\qhedge{
		($#1+#3!\qoffs!-90:#2-#3$)--
		($#2+#1!\qoffs!-90:#3-#1$)--
		($#3+#2!\qoffs!-90:#1-#2$)--cycle}

	\coordinate (12) at ($#1!\qoffs!90:#2$);
	\coordinate (13) at ($#1!\qoffs!-90:#3$);
	\coordinate (23) at ($#2!\qoffs!90:#3$);
	\coordinate (21) at ($#2!\qoffs!-90:#1$);
	\coordinate (31) at ($#3!\qoffs!90:#1$);
	\coordinate (32) at ($#3!\qoffs!-90:#2$);
	
	\def\nqhedge{
		(13) let \p1=($(13)-#1$), \p2=($(12)-#1$) in
			arc[start angle={atan2(\y1,\x1)}, delta angle={atan2(\y2,\x2)-atan2(\y1,\x1)-360*(atan2(\y2,\x2)-atan2(\y1,\x1)>0)}, x radius=\qoffs, y radius=\qoffs]--
		(21) let \p1=($(21)-#2$), \p2=($(23)-#2$) in
			arc[start angle={atan2(\y1,\x1)}, delta angle={atan2(\y2,\x2)-atan2(\y1,\x1)-360*(atan2(\y2,\x2)-atan2(\y1,\x1)>0)}, x radius=\qoffs, y radius=\qoffs]--
		(32) let \p1=($(32)-#3$), \p2=($(31)-#3$) in
			arc[start angle={atan2(\y1,\x1)}, delta angle={atan2(\y2,\x2)-atan2(\y1,\x1)-360*(atan2(\y2,\x2)-atan2(\y1,\x1)>0)}, x radius=\qoffs, y radius=\qoffs]--cycle}

		\ifx\relax#5\relax
		\def\qlwidth{1pt}
	\else
		\def\qlwidth{#5}
	\fi
	
		\ifx\relax#7\relax
		\fill \nqhedge;
	\else
		\fill[#7]\nqhedge;
	\fi

		\ifx\relax#6\relax
		\draw[line width=\qlwidth,rounded corners=\qoffs]\nqhedge;
	\else
		\draw[line width=\qlwidth,#6]\nqhedge;
	\fi
}
\let\sm=\smallsetminus
\begin{document}
\title[Unavoidable subgraphs in Ramsey graphs]{Unavoidable subgraphs in Ramsey graphs}

\author[Chr. Reiher]{Christian Reiher}
\address{Fachbereich Mathematik, Universit\"at Hamburg, Hamburg, Germany}
\email{christian.reiher@uni-hamburg.de}

\author[V. R\"{o}dl]{Vojt\v{e}ch R\"{o}dl}
\address{Department of Mathematics, Emory University, Atlanta, USA}
 \email{vrodl@emory.edu}
 
\author[M. Schacht]{Mathias Schacht}
\address{Fachbereich Mathematik, Universit\"at Hamburg, Hamburg, Germany}
\email{schacht@math.uni-hamburg.de}

\thanks{The second author is supported by NSF grant DMS~2300347.}

\keywords{Ramsey theory, girth}
\subjclass[2020]{05D10 (primary), 05C55 (secondary)}

\begin{abstract}
	We study subgraphs that appear in large Ramsey graphs for a given graph $F$. 
	The recent girth Ramsey theorem of the first two authors asserts that there are  Ramsey 
	graphs such that all small subgraphs are `forests of copies of~$F$' amalgamated 
	on vertices and edges. We derive a few further consequences from this structural result 
	and investigate to which extent such forests of copies must be present in  
	Ramsey graphs.
\end{abstract} 

\maketitle

\section{Introduction}
\label{sec:introduction}
Some of the most fundamental and general types of questions in combinatorics 
aim for understanding the interplay of \emph{local} and \emph{global} properties in discrete structures. 
As an insightful example for our discussion serve those local properties that are captured by the appearance of 
subgraphs of fixed isomorphism type in large graphs with high chromatic number. We consider having a high chromatic number to be 
a global property of graphs, since in general it cannot be bounded by studying subgraphs of given size. 
In fact, this is a consequence of the work of Erd\H os~\cite{Er59}, which asserts the existence of 
graphs with arbitrarily high chromatic number all of whose `small' subgraphs are forests. 
This result is qualitatively complemented by the simple graph theoretic fact, that any graph of sufficiently high chromatic number 
must contain all forests of fixed size.

Natural generalisations of these results, in the context of Ramsey theory for \emph{vertex colourings},
were obtained by Ne\v set\v ril and R\"odl~\cite{NR76} and by Diskin et al.~\cite{DHKZ24}. Here we investigate to which extent 
these results generalise to \emph{edge colourings}. A central tool for our investigation is the 
recent girth Ramsey theorem of the first two authors and we start with its formulation.

\subsection{Girth Ramsey theorem}
Given two graphs $F$ and~$G$ and a number of colours~$r\geq 2$ we write $G\lra (F)_r$ for the statement that every $r$-colouring 
of the edge set of $G$ yields a monochromatic copy of $F$ which is induced in $G$. In the situation, when we do not require the monochromatic copy to be necessarily induced, we write $G\nilra (F)_r$. We may also say $G$ is a \emph{Ramsey graph} for $F$ and $r$ colours in these situations.
If a graph $G$ fails to be such a Ramsey graph, then we write~$G\nlra(F)_r$ and $G\ninlra(F)_r$, respectively. 
It is easy to see that Ramsey graphs for a given graph $F$ must contain copies of~$F$ sharing vertices and edges. 
Roughly speaking, the girth Ramsey theorem (see Theorem~\ref{thm:grt} below) asserts, that there are Ramsey graphs with the property that locally the copies 
of~$F$ have a forest-like structure. In order to make this precise we define forests in this context.
\begin{dfn}[Forests of copies] \label{dfn:forcop}
	A set of graphs~$\ccF$ is called a {\it forest of copies} if there
	exists an enumeration $\ccF=\{F_1, \dots, F_{|\ccF|}\}$ such that
	for every $j\in [2, |\ccF|]$ the set
	\[
		V(F_j)\cap\bigcup_{i<j}V(F_i)
	\]
	is either empty or a single vertex, or an edge belonging both to~$E(F_j)$
	and to $\bigcup_{i<j}E(F_i)$.
\end{dfn}
We remark that for a forest of copies $\ccF$ of a graph $F$ the graph $\fF=\bigcup \ccF$ may contain additional 
copies of $F$, which are not present in~$\ccF$ itself. It is also important to note that contrary to forests in graphs, 
subsets of forests of copies might not be such forests themselves, as the example in Figure~\ref{fig:12} illustrates. 
On the left hand side we see a `cycle' of triangles, i.e., a collection of
five triangles not forming a forest of copies. By adding two further edges, however,
we can hide this configuration inside a forest of eight triangles (see Figure~\ref{fig:12b}).
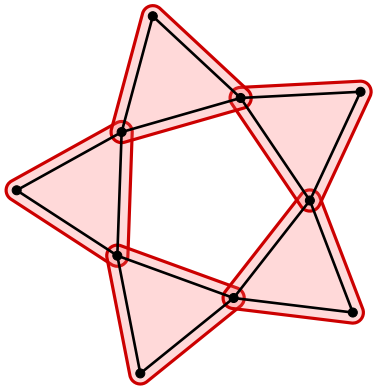
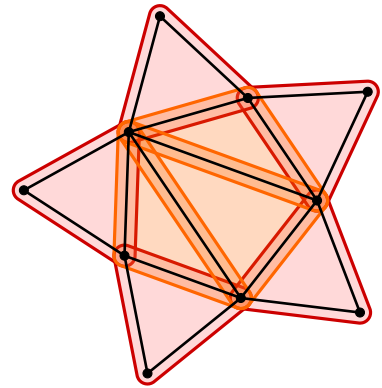
\begin{figure}[ht]
	\centering
	\begin{subfigure}[b]{0.4\textwidth}
		\centering
		\begin{tikzpicture}

			\coordinate (x0) at (142:1.4cm);
			\coordinate (x1) at (214:1.4cm);
			\coordinate (x2) at (286:1.4cm);
			\coordinate (x3) at (358:1.4cm);
			\coordinate (x4) at (70:1.4cm);
			\coordinate (x5) at (106:2.5cm);
			\coordinate (x6) at (178:2.5cm);
			\coordinate (x7) at (250:2.5cm);
			\coordinate (x8) at (322:2.5cm);
			\coordinate (x9) at (34:2.5cm);

			\hedge{(x0)}{(x1)}{(x6)}{4pt}{1.2pt}{red!80!black}{red!60!white,opacity=0.25}
			\hedge{(x1)}{(x2)}{(x7)}{4pt}{1.2pt}{red!80!black}{red!60!white,opacity=0.25}
			\hedge{(x2)}{(x3)}{(x8)}{4pt}{1.2pt}{red!80!black}{red!60!white,opacity=0.25}
			\hedge{(x3)}{(x4)}{(x9)}{4pt}{1.2pt}{red!80!black}{red!60!white,opacity=0.25}
			\hedge{(x4)}{(x0)}{(x5)}{4pt}{1.2pt}{red!80!black}{red!60!white,opacity=0.25}

			\draw[line width=1pt] (x0)--(x1)--(x2)--(x3)--(x4)--cycle;
			\draw[line width=1pt] (x4)--(x5)--(x0)--(x6)--(x1)--(x7)--(x2)--(x8)--(x3)--(x9)--cycle;

			\foreach \i in {0,...,9}
			\filldraw (x\i) circle (1.7pt);

		\end{tikzpicture}

		\caption{A `cycle' of triangles \dots}
		\label{fig:12a}

	\end{subfigure}
	\hfill
	\begin{subfigure}[b]{0.4\textwidth}
		\centering

		\begin{tikzpicture}

			\coordinate (x0) at (142:1.4cm);
			\coordinate (x1) at (214:1.4cm);
			\coordinate (x2) at (286:1.4cm);
			\coordinate (x3) at (358:1.4cm);
			\coordinate (x4) at (70:1.4cm);
			\coordinate (x5) at (106:2.5cm);
			\coordinate (x6) at (178:2.5cm);
			\coordinate (x7) at (250:2.5cm);
			\coordinate (x8) at (322:2.5cm);
			\coordinate (x9) at (34:2.5cm);

			\hedge{(x0)}{(x1)}{(x6)}{4pt}{1.2pt}{red!80!black}{red!60!white,opacity=0.25}
			\hedge{(x1)}{(x2)}{(x7)}{4pt}{1.2pt}{red!80!black}{red!60!white,opacity=0.25}
			\hedge{(x2)}{(x3)}{(x8)}{4pt}{1.2pt}{red!80!black}{red!60!white,opacity=0.25}
			\hedge{(x3)}{(x4)}{(x9)}{4pt}{1.2pt}{red!80!black}{red!60!white,opacity=0.25}
			\hedge{(x4)}{(x0)}{(x5)}{4pt}{1.2pt}{red!80!black}{red!60!white,opacity=0.25}

			\hedge{(x0)}{(x3)}{(x2)}{4pt}{1.2pt}{red!60!yellow}{red!60!yellow,opacity=0.25}
			\hedge{(x0)}{(x2)}{(x1)}{4pt}{1.2pt}{red!60!yellow}{red!60!yellow,opacity=0.25}
			\hedge{(x0)}{(x4)}{(x3)}{4pt}{1.2pt}{red!60!yellow}{red!60!yellow,opacity=0.25}

			\draw[line width=1pt] (x0)--(x1)--(x2)--(x3)--(x4)--cycle;
			\draw[line width=1pt] (x4)--(x5)--(x0)--(x6)--(x1)--(x7)--(x2)--(x8)--(x3)--(x9)--cycle;
			\draw[line width=1pt] (x3)--(x0)--(x2);

			\foreach \i in {0,...,9}
			\filldraw (x\i) circle (1.7pt);

		\end{tikzpicture}
		\caption{\dots contained in a forest of triangles.}
		\label{fig:12b}
	\end{subfigure}
	\caption{Some subforests fail to be forests.}
	\label{fig:12}
\end{figure}

We denote by $\binom GF$ the set of all induced copies
of~$F$ in~$G$, i.e., the set of all induced subgraphs of $G$ isomorphic to $F$.
For a subsystem $\ccG\subseteq \binom GF$ and a number of colours~$r$ the
partition relation $\ccG\lra (F)_r$ indicates that for every $r$-colouring
of $E(G)$ one
of the copies in $\ccG$ is monochromatic. 
The r\^{o}le of~$\ccX$ in the following theorem is due to the fact that, for the above
reason,~$\ccF$ can only be demanded to be a subset of a forest copies rather than an actual forest of copies.
With this notation at hand we can state the following version of the girth Ramsey theorem of the first two authors~\cite{RR}.
\begin{thm}[Girth Ramsey theorem] \label{thm:grt}
	Given a graph $F$ and $r, \l\in \NN$ there exists a graph~$G$
	together with a system of copies $\ccG\subseteq\binom{G}{F}$
	satisfying not only $\ccG\lra (F)_r$, but also the following statement:
	For every $\ccF\subseteq \ccG$ with $|\ccF|\in [2, \l]$ there exists
	a set $\ccX\subseteq \ccG$ such that $|\ccX|\le |\ccF|-2$ and $\ccF\cup\ccX$
	is a forest of copies. \qed
\end{thm}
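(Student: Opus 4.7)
The plan is to prove the theorem by a partite amalgamation construction in the tradition of Ne\v set\v ril--R\"odl. First I would pass to a partite reformulation: fix a vertex partition $V(F)=V_1\dcup\cdots\dcup V_k$ and seek a $k$-partite graph $G$ with partition $W_1\dcup\cdots\dcup W_k$ such that the system $\ccG$ of transversal induced copies of $F$ has both the Ramsey property and the forest-of-copies completion property. The non-partite conclusion then follows by a standard product reduction.

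The graph $G$ would be built in $k$ stages. Starting from a ``picture graph'' $G^0$ encoding a prescribed initial arrangement of copies, stage $s$ blows up the class $W_s^{s-1}$ by plugging in disjoint copies of a previously constructed partite gadget, supplied by an inductive hypothesis on a suitable parameter (say the lexicographic pair $(|V(F)|,\ell)$). That the final $G=G^k$ satisfies $\ccG\lra(F)_r$ is obtained by the standard Ne\v set\v ril--R\"odl cascade: any $r$-edge-colouring of $G$ restricts to a colouring on each inserted gadget, and the inductive Ramsey property propagates a monochromatic copy from the innermost gadget outward to some copy in~$\ccG$.

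The genuinely new content lies in preserving the forest-of-copies completion property through the amalgamations. Given $\ccF\subseteq\ccG$ of size $m\in[2,\ell]$, I would induct on the latest stage at which a copy in $\ccF$ was introduced. All copies from that last stage share the corresponding vertex class, so the problem reduces to completing a subsystem inside a single gadget; this is handled by the inductive forest-of-copies hypothesis, and its output is glued to the tree-like structure of the earlier stages. The bound $|\ccX|\le|\ccF|-2$ reflects the elementary fact that a connected forest of copies built from $m$ pieces requires at most $m-1$ ``merges'' along shared vertices or edges, while two of these come for free (the first copy contributes no merge, and one of its extensions is free by definition).

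The main obstacle is ensuring that the gadgets themselves do not introduce uncontrolled intersections between copies: whenever two copies coming from different blow-ups share more than a single vertex or a single edge, enough additional copies must be present in $\ccG$ to triangulate the offending cycle of copies within the allowed budget $|\ccX|\le|\ccF|-2$. Arranging this uniformly across all $k$ stages, while the blow-up procedure keeps creating new incidences, is the genuine technical heart of the girth Ramsey theorem and rests on maintaining very stringent genericity conditions on how the gadgets are glued together at each step.
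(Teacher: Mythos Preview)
The paper does not contain a proof of this theorem at all: Theorem~\ref{thm:grt} is stated with a terminal \qed and attributed to~\cite{RR}, where the full argument (well over a hundred pages) is given. So there is no ``paper's own proof'' against which to compare your proposal; the present article merely imports the result as a black box.

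As for the proposal itself, what you have written is not a proof but a narrative identifying the classical Ne\v set\v ril--R\"odl partite amalgamation framework and then acknowledging that the essential difficulty---controlling the intersection pattern of copies across all amalgamation stages so that every small subsystem can be completed to a forest within the budget $|\ccX|\le|\ccF|-2$---remains to be solved. The sentence ``Arranging this uniformly across all $k$ stages\dots\ is the genuine technical heart of the girth Ramsey theorem and rests on maintaining very stringent genericity conditions'' is precisely where the actual work lies, and you offer no mechanism for it. The standard partite construction does \emph{not} by itself yield the forest-of-copies completion property: the amalgamation creates new incidences between copies that can form long ``cycles of copies'' with no evident way to triangulate them using at most $|\ccF|-2$ additional members of~$\ccG$. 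The proof in~\cite{RR} introduces an elaborate hierarchy of auxiliary structures (Girth Ramsey constructions, trains, and so on) and a delicate bookkeeping of how copies can overlap, none of which is hinted at here. In short, your outline correctly names the genre of the argument and its central obstacle, but it does not supply any of the ideas that overcome that obstacle, so it cannot be assessed as a correct proof.
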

Similarly, as Erd\H os' theorem concerns graphs $G$ yielding monochromatic copies of $K_2$ for vertex colourings, 
Theorem~\ref{thm:grt} concerns graphs $G$ yielding monochromatic copies of~$F$ for edge colourings. Both theorems then
assert that there are such graphs $G$ with the additional property of containing locally only forests of the target graph $F$.

\subsection{New results}
Our first two results address the question to which extent the converse of Theorem~\ref{thm:grt} holds. 
More precisely, for which graphs $F$ every forest of copies necessarily occurs in every Ramsey graph for sufficiently many colours. 
In the positive direction we show that this is indeed true for cycles and cliques.
\begin{thm}\label{thm:forcing}
	Let $F$ be a cycle or a balanced, complete, multipartite graph with at least two edges. For  every forest $\ccF$ of copies of~$F$,
	there exists some $r\geq 2$ such that every Ramsey graph $G\nilra(F)_r$ contains a subgraph isomorphic 
	to $\bigcup \ccF$.
\end{thm}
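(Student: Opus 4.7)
I would prove the theorem by induction on $k := |\ccF|$. The case $k=1$ is immediate, because every Ramsey graph $G \nilra (F)_r$ with $r\ge 2$ contains a copy of $F$. For the inductive step, fix an enumeration $\ccF = \{F_1,\dots,F_k\}$ as in Definition~\ref{dfn:forcop}, write $H := \bigcup_{i<k} F_i$, and let $A := V(F_k)\cap V(H)$, so $A$ is either empty, a single vertex, or an edge of~$H$. The inductive hypothesis yields some $r_0 = r_0(H)$ such that every $G\nilra (F)_{r_0}$ contains a copy of $H$. The aim of the step is to find $r\ge r_0$ such that every $G\nilra(F)_r$ contains a copy of $\bigcup\ccF$ as a subgraph.

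The plan for the step is to argue by contrapositive: given $G$ with $G\not\supseteq \bigcup\ccF$, I would build an $r$-colouring of $E(G)$ with no monochromatic copy of $F$, contradicting $G\nilra(F)_r$. To organise the argument, consider the hypergraph $\ccH$ on vertex set $E(G)$ whose hyperedges are the edge sets of copies of $F$ in $G$; the Ramsey property of $G$ is equivalent to $\chi(\ccH) > r$, while the desired colouring corresponds to a proper colouring of $\ccH$ using few colours. The key claim is that the absence of $\bigcup\ccF$ in $G$ bounds the amalgamation patterns available to copies of $F$ in $G$, which combined with the edge-transitivity of $F$ — valid for both cycles $C_n$ and balanced complete multipartite graphs — yields $\chi(\ccH) \le r(\ccF)$ for some $r(\ccF)$ independent of $G$.

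The construction of this colouring is the main obstacle. The intended mechanism is an iterative peeling: one locates an anchor copy of $H$ in $G$ (available by induction), then seeks a copy of $F$ in $G$ attached to it along the correct pattern determined by~$A$; if such an extension always exists one obtains~$\bigcup\ccF$, and otherwise the missing extensions dictate how to colour. The edge-transitivity of $F$ enters critically: for $F=C_n$ one tracks positions around the cycle via its dihedral automorphism group, and for $F=K_{t,\dots,t}$ one uses the wreath product automorphism group acting transitively on ordered edges. In each case the forbidden extension pattern translates into a bounded local structure on $\ccH$, yielding the chromatic bound.

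The case $A=\emptyset$ (where $F_k$ is attached to $H$ as a new connected component) should be treated separately and reduces to an abundance statement: any $G\nilra(F)_r$ contains roughly $r$ pairwise vertex-disjoint copies of $F$ (otherwise one colours by the near-disjoint components), so that vertex-disjoint gluing becomes automatic once~$r$ exceeds the number of components of $\bigcup\ccF$. The remaining cases $A$ a vertex or edge carry the substance of the argument, and this is where I expect the paper's careful use of the edge-transitivity of~$F$ — and perhaps a supersaturation version of the base case — to be essential.
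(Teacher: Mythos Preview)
Your inductive framework matches the paper's, and the instinct to bound $\chi(\ccH)$ when $G$ lacks the target forest is the right one. But there is a genuine gap at exactly the point you label ``the main obstacle'': you never supply a mechanism that converts the absence of $\bigcup\ccF$ into a bounded colouring, and the sketched ``iterative peeling'' does not work as stated. The difficulty is that the absence of one particular forest $\bigcup\ccF$ imposes essentially no local constraint on how copies of $F$ sit in $G$; knowing that \emph{some} anchor copy of $H$ fails to extend says nothing about edges far from that anchor, so ``the missing extensions dictate how to colour'' is a hope, not an argument.

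The paper closes this gap with two ingredients absent from your proposal. First, a reduction: every forest of copies of $F$ embeds into an \emph{edgy} forest, in which every attachment is along an edge, so only the case $|A|=2$ matters (this also disposes of your separate $A=\varnothing$ discussion). Second, and crucially, the inductive step does not use the absence of $\bigcup\ccF$ at all. It rests on a standalone lemma: if a graph contains no $(m,F)$ --- that is, $m$ copies of $F$ glued along a common edge --- then it satisfies $\ninlra(F)_{c}$ for some bounded $c=c(m,F)$. Given this lemma, one partitions $E(G)$ into edges that are kernels of an $(m,F)$ with $m$ large versus the rest; the lemma colours the rest with boundedly many colours, so the ``rich'' edges form a subgraph $H'$ with $H'\nilra(F)_{r^-}$, which by induction contains a copy of $\bigcup\ccF^-$; every edge of that copy has so many $F$-copies through it in $G$ that one of them is internally disjoint from $\bigcup\ccF^-$.

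The lemma is where the real work lies and where the structure of $F$ enters. For balanced complete multipartite $F$ it is proved via the de~Bruijn--Erd\H{o}s set-mapping theorem: to each edge $e$ one associates a bounded set $f(e)$ of edges joining $e$ to the non-kernel vertices of a maximal $(m_e,F)$ with kernel $e$; the complete-multipartite property guarantees that every copy of $F$ through $e$ contributes an edge to $f(e)$, so the resulting free colouring has no monochromatic~$F$. For cycles the corresponding lemma is substantially harder and requires an iterated-neighbourhood construction before the set-mapping argument applies. Your invocation of edge-transitivity is relevant (it makes $(m,F)$ well defined and underlies the edgy-forest reduction), but it is not by itself the engine of the colouring.
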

The proof of Theorem~\ref{thm:forcing} will be presented in \ssign\ref{sec:forcing}.
It seems plausible that Theorem~\ref{thm:forcing} extends to a larger family of graphs. 
However, our next result shows that there are graphs for which such a statement fails.

\begin{thm}\label{thm:noforest}
	There exist infinitely many graphs $F$ such that for every integer $r\geq 2$ 
	there exists a Ramsey graph $G\lra(F)_r$ with the following properties. 
	\begin{enumerate}[label=\alabel]
		\item\label{it:14a} All copies of $F$ in $G$ are induced. 
		\item\label{it:14b} Any two induced copies of $F$ in $G$, 
			sharing an edge $xz$, also share a vertex $y$ that is neither 
			adjacent to $x$ nor adjacent to $z$ in~$G$.
	\end{enumerate}
\end{thm}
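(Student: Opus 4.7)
The plan is to exhibit, for each sufficiently large $n$, a graph $F_n$ together with, for every $r\geq 2$, an induced Ramsey graph $G$ for $F_n$ satisfying conditions (a) and (b). A natural candidate family is $F_n = H_n \dcup \{y\}$, the disjoint union of a suitable base graph $H_n$ with a single isolated vertex $y$. In such $F_n$ the vertex $y$ is non-adjacent to every other vertex of $F_n$, and hence in particular to both endpoints of every edge of $F_n$; it therefore plays the role of the ``universal witness'' required by condition (b).

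Given $F_n$ and $r$, the graph $G$ is built in two stages. In the first stage, we apply the girth Ramsey theorem (Theorem~\ref{thm:grt}) to the base graph $H_n$ with a large parameter $\l$, obtaining a graph $\widetilde H$ and a system $\ccG\subseteq\binom{\widetilde H}{H_n}$ satisfying $\ccG\lra(H_n)_r$ with a tight forest-of-copies intersection structure --- in particular, after a further refinement, every edge of $\widetilde H$ that lies in some distinguished copy of $H_n$ in fact lies in exactly one such copy. In the second stage, we adjoin to $\widetilde H$ a collection of witness vertices, with attachments chosen so that every induced copy of $F_n$ in $G$ decomposes uniquely as a distinguished copy of $H_n$ in $\ccG$ together with its associated witness, while no non-induced subgraph copy of $F_n$ is created.

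The verification would then proceed as follows. The arrow relation $G\lra(F_n)_r$ is inherited from $\ccG\lra(H_n)_r$ by appending to each monochromatic $H_n$-copy its associated witness. Condition (a) is verified by combining the rigidity of $\widetilde H$ with the controlled attachment of the witnesses, so that no vertex outside a distinguished copy can ``partially attach'' to it in a way that creates a non-induced copy of $F_n$. Condition (b) follows because, if two induced copies of $F_n$ share an edge $xz$, the edge-to-copy uniqueness forces them to lie above the same distinguished $H_n$-copy, hence they share its associated witness, which is by construction non-adjacent to both $x$ and $z$.

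The main obstacle is to ensure simultaneously the Ramsey arrow, the edge-to-copy uniqueness, and the absence of partial attachments of $\widetilde H$-vertices to distinguished copies. The first two are obtainable from a careful application of Theorem~\ref{thm:grt}, but the third is the key technical point: one must force every vertex of $\widetilde H$ outside a distinguished copy to be either adjacent to all of its vertices or to none, and in fact to eliminate the ``adjacent to none'' case, so as to reduce the possible witnesses in $G$ to the adjoined ones alone. I expect the cleanest route to this is to apply Theorem~\ref{thm:grt} to an auxiliary graph whose copies already encode the desired rigid adjacency structure, rather than to post-process $\widetilde H$.
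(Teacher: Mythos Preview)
Your approach has two fatal gaps.

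First, the edge-to-copy uniqueness you claim cannot coexist with the Ramsey property. If $|E(H_n)| \geq 2$ and every edge of $\widetilde H$ lies in at most one copy from $\ccG$, then for each $C \in \ccG$ choose a $2$-colouring of $E(C)$ that is not monochromatic; since the copies are edge-disjoint these colourings combine consistently to a $2$-colouring of $E(\widetilde H)$ with no monochromatic member of $\ccG$. So whatever ``further refinement'' you perform to obtain uniqueness necessarily destroys the arrow $\ccG \lra (H_n)_r$ you need in the next line.

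Second, and independently, taking $F_n = H_n \dcup \{y\}$ makes condition~\ref{it:14a} incompatible with copies of $H_n$ sharing edges at all. Suppose two copies $C_1, C_2$ of $H_n$ in $G$ share an edge $xz$ (which must happen, by the previous paragraph), and pick $w \in C_2 \setminus C_1$. Then $C_1 \cup \{w\}$ is a not-necessarily-induced copy of $F_n$, so by~\ref{it:14a} it is induced, whence $w$ is non-adjacent to every vertex of $C_1$, in particular to $x$ and $z$. Since the forest structure gives $V(C_1)\cap V(C_2)=\{x,z\}$, this holds for every $w \in V(C_2) \setminus \{x,z\}$, forcing every vertex of $H_n$ off a given edge to be non-adjacent to both its endpoints; that is, $H_n$ must be a matching. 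Chasing further, every $H_n$-copy in $G$ then consists of isolated edges of $G$, and condition~\ref{it:14b} fails outright: two induced copies of $F_n$ that share an edge but use different ``extra'' vertices share no common non-adjacent witness. The technical point you flag at the end is not merely hard---it is impossible in this architecture.

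The paper's route is entirely different: the witness $y$ is placed \emph{inside} $F$ rather than adjoined as an isolated vertex. One derives $F = F_S$ from an ordered linear $3$-uniform hypergraph $S$ by replacing each triple $x < y < z$ with the graph edge $xz$; the middle vertex $y$ stays in $V(F)$ and is non-adjacent to $x,z$ by linearity of $S$. Applying the ordered-hypergraph girth Ramsey theorem to $S$ yields $H$, and one sets $G = F_H$. Edges of $G$ are then in bijection with hyperedges of $H$, so two $F$-copies sharing a graph edge $xz$ automatically share the unique hyperedge $xyz$ and hence the vertex $y$, giving~\ref{it:14b}. The hypergraph $S$ is chosen so that $F_S$ is $4$-connected; this high connectivity, together with the forest structure of $S$-copies in $H$, forces every copy of $F$ in $G$ to come from a single copy of $S$ in $H$, which yields~\ref{it:14a}. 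The graphs $F$ produced are therefore dense and highly connected---the opposite regime from your disconnected candidates.
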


At first sight, property~\ref{it:14b} seems quite orthogonal to anything the girth 
Ramsey theorem might be capable of doing for us. Nevertheless, the proof of 
Theorem~\ref{thm:noforest} presented in~\ssign\ref{sec:noforest} relies on a 
version of the girth Ramsey theorem for ordered hypergraphs, which we discuss 
in \ssign\ref{sec:ordered}. It might also be interesting to point out that the 
graphs $F$ constructed to this end are dense and highly connected, so the phenomenon
described here is not just some pathologic degenerate case.

The two remaining results concern other local structures in Ramsey graphs. The next theorem
establishes the existence of Ramsey graphs for many colours, 
which locally fail to have the Ramsey property for two colours. In that sense it parallels the simple consequence of Erd\H os' theorem 
that there are high chromatic graphs, which are locally bipartite. 
\begin{thm}\label{thm:simple}
	For every graph $F$ containing a cycle and all integers $r\geq 2$ and $n\in\NN$ there exists a Ramsey graph~$G\lra(F)_r$, such that for every subgraph $H\subseteq G$ on $n$ vertices we have $H\ninlra(F)_2$.
\end{thm}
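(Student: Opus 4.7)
The plan is to apply Theorem~\ref{thm:grt} to $F$ with $r$ colours and parameter $\l=\binom{n}{|V(F)|}$, obtaining a graph $G$ and a system $\ccG\subseteq\binom{G}{F}$ with $\ccG\lra(F)_r$ (whence $G\lra(F)_r$) together with the forest-of-copies property for every subsystem of size in $[2,\l]$. A routine inspection of the partite construction underlying Theorem~\ref{thm:grt} lets us arrange additionally that $\ccG$ contains \emph{every} copy of $F$ appearing in $G$, induced or not; equivalently, the copies of $F$ in any subgraph $H\subseteq G$ are precisely the members of $\ccG$ contained in $H$.

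Now fix an arbitrary $n$-vertex subgraph $H\subseteq G$ and set $\ccF_H=\{F'\in\ccG\st F'\subseteq H\}$. Since distinct induced copies have distinct vertex sets, $|\ccF_H|\leq\binom{n}{|V(F)|}=\l$; the degenerate cases $|\ccF_H|\in\{0,1\}$ admit an obvious 2-colouring of $E(H)$ with no monochromatic copy of $F$, so I henceforth assume $|\ccF_H|\geq 2$. The forest-of-copies property furnishes some $\ccX\subseteq\ccG$ with $|\ccX|\leq|\ccF_H|-2$ such that $\ccF_H\cup\ccX$ is a forest of copies, which I enumerate as $\{F_1,\dots,F_t\}$ in the sense of Definition~\ref{dfn:forcop}. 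Since $F$ contains a cycle, $|E(F_j)|\geq 3$ for every $j$; I colour the edges of $\bigcup(\ccF_H\cup\ccX)$ along this enumeration, noting that at step $j$ the intersection $V(F_j)\cap\bigcup_{i<j}V(F_i)$ is empty, a single vertex, or a single edge of $F_j$, so at most one edge of $F_j$ has been coloured at earlier steps. The remaining two or more free edges of $F_j$ then allow me to enforce that both colours appear on $F_j$. Extending the colouring arbitrarily on edges of $H$ outside $\bigcup(\ccF_H\cup\ccX)$, I obtain a 2-colouring of $E(H)$ under which no $F'\in\ccF_H$ is monochromatic, and hence by the first-paragraph observation no copy of $F$ in~$H$ is monochromatic at all. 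This yields $H\ninlra(F)_2$.

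The main obstacle is justifying the first-paragraph claim that $\ccG$ may be taken to contain every copy of $F$ in~$G$, and not only the copies distinguished by the construction. This stronger feature is not part of the stated conclusion of Theorem~\ref{thm:grt}, but it is a standard by-product of the partite construction method used to prove such induced Ramsey results: the vertex set of $G$ is organised into partite classes indexed by $V(F)$, with edges placed only in the positions prescribed by $E(F)$, so that accidental copies cannot arise. Verifying that this property survives the iterative girth-Ramsey construction amounts to some additional bookkeeping but requires no new ideas.
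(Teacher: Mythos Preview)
Your argument has a genuine gap precisely where you flag it: the claim that $\ccG$ can be arranged to contain \emph{every} copy of $F$ in $G$ is not part of Theorem~\ref{thm:grt}, and ``routine inspection of the partite construction'' is not a proof. The girth Ramsey construction is far more intricate than the classical partite method, and even in the ordered hypergraph setting the paper needs the additional $(e,v)$-inseparability hypothesis together with Lemma~\ref{lem:33} to obtain merely the induced-copy version of such a statement (Corollary~\ref{cor:ogrt}\ref{iti:ogrt-noother}). For an arbitrary graph $F$ containing a cycle there is no reason to expect this, and your stronger claim covering non-induced copies is less plausible still. Without that claim your greedy colouring controls only the enumerated copies $F_1,\dots,F_t$; a copy of $F$ in $H$ that happens not to lie in $\ccG$ is left entirely uncontrolled, and, as Figure~\ref{fig:12} illustrates, a union $\bigcup\ccF$ of a forest of copies can easily contain copies of $F$ not listed in $\ccF$.

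The paper avoids this issue altogether. Rather than indexing by copies of $F$ inside $H$, it indexes by \emph{edges}: after passing to $G=\bigcup\ccG$, every edge $e\in E(H)$ lies in some $F_e\in\ccG$, the family $\{F_e : e\in E(H)\}$ has size at most $\binom{n}{2}$, and Theorem~\ref{thm:grt} (applied with $\l=\binom{n}{2}$) embeds it in a forest $\ccF$ with $H\subseteq\bigcup\ccF$. The key step is then Corollary~\ref{cor:56}, which shows via the $2$-density calculations of Propositions~\ref{prop:m2forest}--\ref{prop:m2Ramsey} that $\bigcup\ccF\ninlra(F)_2$ for every forest $\ccF$ of copies of a graph $F$ containing a cycle. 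This density argument handles all copies of $F$ in $\bigcup\ccF$ at once, so one never needs to know which copies of $F$ live in $H$ or in $\ccG$.
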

We remark that Theorem~\ref{thm:simple} does not hold for all graphs $F$. For example, it obviously fails for matchings and stars. 
We deduce Theorem~\ref{thm:simple} from the girth Ramsey theorem and we present the details of that proof in~\ssign\ref{sec:simple}.

Our last result shows that for bipartite graphs~$F$
there exist Ramsey graphs containing $F$-free subgraphs with positive density. 
\begin{thm}\label{thm:antipisier}
	Let $F$ be a bipartite graph that cannot be disconnected by deleting either 
	one vertex or two vertices forming an edge,
	and let $\l(F)$ be the number of edges of a longest path in $F$. 
	
	For every integer $r\geq 2$
	there exists a Ramsey graph $G\lra(F)_r$ with the property that every subgraph $H\subseteq G$ contains 
	an $F$-free subgraph $H'\subseteq H$ with $|E(H')|\geq \frac{\l(F)-1}{2\l(F)}|E(H)|$. 
\end{thm}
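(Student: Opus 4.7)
The plan is to combine Theorem~\ref{thm:grt} with a random edge-deletion argument, using the hypothesis on~$F$ to obtain a rigidity property for copies in the resulting Ramsey graph.

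First, I would apply Theorem~\ref{thm:grt} with $r$ colours and a sufficiently large parameter~$\l^*$ to obtain $G$ together with a system of copies $\ccG\subseteq\binom GF$ satisfying $\ccG\lra(F)_r$ and the forest-of-copies property for subsystems of size up to~$\l^*$. We may further assume that $E(G)=\bigcup_{F'\in\ccG}E(F')$. The first key claim is that every copy of $F$ in~$G$ already belongs to~$\ccG$: any $F^*\subseteq G$ isomorphic to~$F$ is covered by boundedly many members of~$\ccG$, which by Theorem~\ref{thm:grt} extend to a forest of copies. In such a forest, two members are glued along either a single vertex or a single edge, producing a cut vertex or a cut of size two consisting of the endpoints of an edge; by the hypothesis on~$F$, no copy of~$F$ can straddle such a cut, so~$F^*$ lies entirely in a single member of the forest and hence $F^*\in\ccG$.

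Given any $H\subseteq G$, let $\ccG_H=\{F'\in\ccG:F'\subseteq H\}$, which by the above enumerates all copies of~$F$ in~$H$. Writing $\l=\l(F)$, for each $F'\in\ccG_H$ I fix a longest path $P_{F'}\subseteq F'$ with exactly $\l$ edges, and independently over~$F'$ delete one of these $\l$ edges uniformly at random. The resulting subgraph $H'\subseteq H$ avoids every $F'\in\ccG_H$, hence is $F$-free, and the expected number of deletions is at most~$|\ccG_H|$. It therefore suffices to prove $|\ccG_H|\leq\tfrac{\l+1}{2\l}|E(H)|$, which yields $\EE[|E(H')|]\geq\tfrac{\l-1}{2\l}|E(H)|$ and hence the desired deterministic bound.

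To establish this bound on $|\ccG_H|$, I would again invoke the forest-extension property of Theorem~\ref{thm:grt}, applied to $\ccG_H$ itself. Writing $\ccG_H\cup\ccX$ for such an extension with $|\ccX|\leq|\ccG_H|-2$ and estimating how many new edges each additional copy of~$F$ contributes in a forest of copies, one obtains
\[
|E(H)|\geq|\ccG_H|\cdot(|E(F)|-2)+3.
\]
Since $F$ is 2-connected one has $|E(F)|\geq|V(F)|\geq\l+1$, and since $F$ is both bipartite and 2-connected one automatically has $\l\geq 3$. A short calculation then shows $|\ccG_H|\leq|E(H)|/(\l-1)\leq\tfrac{\l+1}{2\l}|E(H)|$, as required. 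The main obstacle is the choice of~$\l^*$, which in principle must exceed $|\ccG_H|$ for every~$H$ under consideration; this bookkeeping issue is handled by taking $\l^*$ sufficiently large at the outset or by decomposing $\ccG_H$ into pieces that each fit within the forest-extension range and summing the resulting estimates.
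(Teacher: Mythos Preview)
Your approach is genuinely different from the paper's, and it has a real gap that you already identify but do not resolve: the choice of~$\l^*$. In Theorem~\ref{thm:grt} the parameter~$\l$ is fixed \emph{before}~$G$ is constructed, and the forest-extension conclusion applies only to subfamilies of~$\ccG$ of size at most~$\l$. Your counting step, however, needs the extension for~$\ccG_H$ with~$H$ arbitrary, and~$|\ccG_H|$ can be as large as~$|\ccG|$, which in turn depends on~$\l^*$. Your suggested fix of decomposing~$\ccG_H$ into pieces of size at most~$\l^*$ does not recover the bound: if $\ccG_H=\ccG_1\cup\dots\cup\ccG_k$, then the inequalities $|E(\bigcup\ccG_i)|\ge |\ccG_i|(|E(F)|-2)+3$ cannot simply be summed, because the sets $E(\bigcup\ccG_i)$ overlap and you need a \emph{lower} bound on $|E(\bigcup_i\bigcup\ccG_i)|$. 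I do not see a way to repair this without an additional global statement about~$\ccG$ that Theorem~\ref{thm:grt} does not supply. (Your edge-counting inequality itself, $|E(H)|\ge|\ccG_H|(|E(F)|-2)+3$, \emph{is} correct once the global forest extension is granted: if $\ccF=\ccG_H\cup\ccX$ is a forest of copies with $|\ccX|\le|\ccG_H|-2$, then $|E(\bigcup\ccF)|\ge|\ccF|(|E(F)|-1)+1$ and at most $|\ccX|\cdot|E(F)|$ of these edges lie only in members of~$\ccX$, which yields exactly your bound.)

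The paper avoids the circularity entirely by a different mechanism. It invokes the \emph{ordered} girth Ramsey theorem (Corollary~\ref{cor:ogrt}) with the trivial parameter $n=1$; the forest clause~\ref{iti:ogrt-forest} is then vacuous, and only the global structural property~\ref{iti:ogrt-noother} is used. One fixes an ordering of~$F$ making a longest path $x_0<\dots<x_\l$ monotone. Given~$H\subseteq G$, pass to an $\l$-partite subgraph~$H_0$ retaining a $(1-1/\l)$-fraction of the edges, and split $E(H_0)$ into~$\vec E$ and~$\lvec E$ according to whether the $G$-order agrees with the partition indexing. Neither class can contain a monotone path on~$\l+1$ vertices, while property~\ref{iti:ogrt-noother} forces every copy of~$F$ in~$G$ to contain one; hence each class is $F$-free, and the larger one has at least $\tfrac{\l-1}{2\l}|E(H)|$ edges. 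There is no counting of copies and no dependence on any size parameter, so the argument works uniformly for all~$H$.
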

Theorem~\ref{thm:antipisier} is remotely related to Pisier's problem and we refer to the survey~\cite{ENR90}, 
where the result for~$F=C_4$ was stated.
We remark that, due to limitations of our approach, 
the promised subgraph $H'$ has less than half of the edges of~$H$
and, hence, the corresponding statement trivially holds for graphs with 
chromatic number at least three. In view of that we put forward the following question.

\begin{quest}
	Does there exist an $\alpha>1/2$ such that for every integer $r\geq 2$ there is a Ramsey graph 
	$G\lra(K_3)_r$ with the property that every $H\subseteq G$ contains a triangle-free subgraph $H'\subseteq H$
	with $|E(H')|\geq \alpha |E(H)|$.
\end{quest}

The proof of Theorem~\ref{thm:antipisier} presented in \ssign\ref{sec:antipisier} also relies on the girth Ramsey theorem for ordered graphs
stated in \ssign\ref{sec:ordered}.

\subsection{Ramsey infinite graphs}
It turns out that the proof of Theorem~\ref{thm:simple} also shows, that graphs containing a cycle have infinitely many minimal 
Ramsey graphs. We recall that a graph $F$ is \emph{Ramsey infinite}, 
if there are infinitely many minimal graphs~$G$ satisfying the  partition 
relation $G\nilra(F)_2$ and otherwise we say $F$ is \emph{Ramsey finite}.

Burr, Erd\H os, and Lov\'asz~\cite{BEL76} expressed the belief that with a few exceptions most graphs are Ramsey infinite
and this was confirmed through the work of Murty (see, e.g.,~\cite{BEL76}), of Burr, Erd\H os, and Lov\'asz~\cite{BEL76}, 
of Ne\v set\v ril and R\"odl~\cite{NR78}, of Faudree~\cite{Fa91}, and of R\"odl and Ruci\'nski (see, e.g., \L uczak~\cite{Lu94}).
As it turned out the only Ramsey finite graphs are certain types of star forests, among which are 
matchings of any size and stars with an odd number of edges (see the work of Faudree~\cite{Fa91} for a full classification).
The complementing case, that all graphs containing a cycle are Ramsey infinite, is a 
consequence of the work of R\"odl and Ruci\'nski~\cites{RR93,RR95} on thresholds for Ramsey properties in random graphs.
\begin{thm}\label{thm:Rinf}
	Every graph containing a cycle is Ramsey infinite. \qed
\end{thm}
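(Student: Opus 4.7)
The plan is to deduce Theorem~\ref{thm:Rinf} directly from Theorem~\ref{thm:simple}, as hinted in the preceding paragraph. Fix a graph $F$ containing a cycle. It suffices to exhibit, for every $n\in\NN$, a minimal Ramsey graph for $F$ and two colours with more than $n$ vertices; as there are only finitely many graphs on any fixed number of vertices, such minimal Ramsey graphs of arbitrarily large order yield infinitely many non-isomorphic examples.

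Given $n\in\NN$, apply Theorem~\ref{thm:simple} with $r=2$ and the given $n$ to obtain a graph $G$ satisfying $G\lra(F)_2$ --- which in particular entails $G\nilra(F)_2$ --- such that every subgraph $H\subseteq G$ on $n$ vertices fulfils $H\ninlra(F)_2$. One may assume that $|V(G)|\geq n$, since the construction underlying Theorem~\ref{thm:simple} produces Ramsey graphs of this size (otherwise the subgraph condition is vacuous and can be restored by applying the theorem with a sufficiently large parameter $n'\geq n$). Let $G^*\subseteq G$ be a subgraph-minimal element of the family of subgraphs satisfying $\nilra(F)_2$; this $G^*$ exists by finite descent starting from $G$ itself, and subgraph-minimality implies edge-minimality, so $G^*$ is a minimal Ramsey graph in the customary Burr--Erd\H{o}s--Lov\'asz sense.

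The crucial claim is $|V(G^*)|>n$. Suppose for contradiction that $|V(G^*)|\leq n$; then adjoin $n-|V(G^*)|$ fresh vertices from $V(G)\setminus V(G^*)$ to $G^*$ as isolated vertices, producing an $n$-vertex subgraph $H\subseteq G$ with $E(H)=E(G^*)$ and $G^*\subseteq H$. By monotonicity of the $\nilra$-relation under passage to supergraphs, $H\nilra(F)_2$; however, the defining property of $G$ forces $H\ninlra(F)_2$, a contradiction. Hence $|V(G^*)|>n$, and letting $n$ range over $\NN$ produces minimal Ramsey graphs of unbounded orders, as desired.

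The one subtle point is ensuring $|V(G)|\geq n$ so that enough fresh vertices are available for the padding argument; beyond this bookkeeping, the deduction is essentially a packaging of Theorem~\ref{thm:simple}, which supplies all the substantive Ramsey-theoretic content.
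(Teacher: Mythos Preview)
Your proof is correct and follows essentially the same route as the paper's: both deduce from Theorem~\ref{thm:simple} that any minimal Ramsey subgraph of the graph $G$ it produces must have more than $n$ vertices. The paper packages this as a three-line proof by contradiction (assume finitely many minimal Ramsey graphs, choose $n$ to exceed all their orders, and observe that $G$ would then contain no minimal Ramsey graph despite being Ramsey), whereas you argue directly and add the explicit padding step; the technicality you flag about $|V(G)|\ge n$ is glossed over in the paper's version as well, and is harmless since the proof of Theorem~\ref{thm:simple} actually yields the conclusion for all subgraphs on at most $n$ vertices.
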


In \ssign\ref{sec:simple} we see that this follows easily from Theorem~\ref{thm:simple}.

\section{Forcing forests of cliques and cycles}\label{sec:forcing}
In this section we derive Theorem~\ref{thm:forcing}. The proof for cycles and the proof for 
balanced, complete, multipartite graphs are different and we start the discussion with the latter.

\subsection{Forests of copies of balanced, complete, multipartite graphs}
The proof relies on a theorem of de Bruijn and Erd\H{o}s on set mappings. 
A {\it set mapping} over a set $X$ is defined to be a map $f\colon X\lra\powerset(X)$
with $x\not\in f(x)$ for every $x\in X$. Subsets $Y\subseteq X$ such 
that $y\not\in f(y')$ holds for all $y, y'\in Y$ are called {\it free}. 
Let us quote~\cite{DE51}*{Theorem~3}.

\begin{thm}[de Bruijn and Erd\H{o}s]\label{thm:DBE}
	Let $f$ be a set mapping over a (finite or infinite) set~$X$, and let~$k$ be a 
	positive integer. If $|f(x)|\le k$ for every $x\in X$, then $X$ can be partitioned 
	into $2k+1$ free sets. \qed
\end{thm}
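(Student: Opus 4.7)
The plan is to recast the statement as a graph colouring problem. I would define an auxiliary graph $H$ on vertex set $X$ whose edges are all pairs $\{x,y\}$ with $x\in f(y)$ or $y\in f(x)$. Then a subset $Y\subseteq X$ is free precisely when it is independent in $H$, so the theorem is equivalent to showing $\chi(H)\le 2k+1$.

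For finite $X$ I would establish that $H$ is $2k$-degenerate. Orient each edge $\{x,y\}$ of $H$ by putting $x\to y$ whenever $y\in f(x)$, breaking ties arbitrarily if both $y\in f(x)$ and $x\in f(y)$ hold. The out-degree of every vertex is at most $k$, so $|E(H)|\le k|X|$ and the average degree of $H$ is at most $2k$. Since the same bound is inherited by every induced subgraph (the restricted orientation still has all out-degrees at most $k$), some vertex of $H$ has degree at most $2k$; iterating produces a vertex ordering in which every vertex has at most $2k$ earlier neighbours. Greedy colouring along this order uses at most $2k+1$ colours.

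For arbitrary $X$ I would invoke the de Bruijn--Erd\H{o}s compactness theorem for chromatic number: a graph is $c$-colourable if and only if every finite induced subgraph is. Restricting $f$ to any finite subset $X_0\subseteq X$ and intersecting its images with $X_0$ yields a set mapping on $X_0$ still satisfying $|f(x)|\le k$, so the finite argument provides a $(2k+1)$-colouring of every finite induced subgraph of $H$, and compactness delivers one for all of $H$.

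The main obstacle is the degeneracy step, since the undirected degrees in $H$ are \emph{not} uniformly bounded by $2k$: a single element $x$ may belong to arbitrarily many sets $f(y)$. The averaging argument via the oriented edge count (which applies uniformly to every induced subgraph) is what rescues the situation and constitutes the real content of the finite case; once that is in place, the extension to infinite $X$ is essentially formal.
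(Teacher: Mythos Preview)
The paper does not supply a proof of this theorem at all; it is quoted verbatim from de~Bruijn and Erd\H{o}s~\cite{DE51} and closed with a \qed. Your argument is correct and is essentially the standard proof: reformulate as a colouring problem, use the bounded out-degree orientation to show $2k$-degeneracy of every finite subgraph, and then pass to the infinite case by compactness. (There is a mild self-referential flavour in that the compactness principle you invoke is the other main theorem of the same de~Bruijn--Erd\H{o}s paper, but the two results are logically independent and this is unproblematic.)
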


We call a graph $F$ {\it strongly edge transitive} if for all ordered pairs 
$(x, y), (x', y')\in V(F)^2$ with $xy, x'y'\in E(F)$ there exists an automorphism 
of $F$ sending $x$ to $x'$ and $y$ to $y'$. For instance, all balanced, complete 
multipartite graphs, Kneser graphs, and cycles are strongly edge-transitive. Given 
a strongly edge-transitive graph $F$ with at least one edge and a positive 
integer~$m$ we denote the graph obtained by gluing $m$ copies of $F$ along an edge 
by~$(m, F)$. For clarity we point out that this graph has $m\cdot(v(F)-2)+2$ vertices 
and~$m\cdot(e(F)-1)+1$ edges. The edge where the gluing occurs is called the {\it kernel} 
of~$(m, F)$. In the special case~${m=1}$ every edge can be viewed as a kernel 
of $F\cong (1, F)$. 

\begin{lemma}\label{lem:2127}
	Let $F$ be a balanced, complete, multipartite graph with at least two edges. 
	If 	for some positive integer $m$ a graph $H$ has no subgraphs isomorphic 
	to $(m, F)$, then 
\[
		H\ninlra (F)_{4mv(F)}\,.
	\]
\end{lemma}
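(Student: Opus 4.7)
My plan is to apply the de Bruijn--Erd\H{o}s theorem (Theorem~\ref{thm:DBE}) to a suitable set mapping $f\colon E(H)\to\powerset(E(H))$ engineered so that every free set is $F$-free. It will suffice to arrange $|f(e)|\le 2(m-1)(v(F)-2)$ for all $e\in E(H)$, since then the resulting partition of $E(H)$ into at most $4(m-1)(v(F)-2)+1\le 4mv(F)$ free sets furnishes an edge colouring without a monochromatic copy of~$F$.

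The first step is a pinning argument. Fix an edge $e=xy\in E(H)$ and consider the hypergraph
\[
	\ccS_e = \bigl\{V(F')\sm\{x,y\}\st F'\in\binom{H}{F}\tand e\in E(F')\bigr\}
\]
of $(v(F)-2)$-subsets of $V(H)\sm\{x,y\}$. Any $m$ pairwise disjoint members of $\ccS_e$ would, together with the edge~$e$, determine a copy of $(m,F)$ in~$H$, contradicting our hypothesis; hence $\ccS_e$ has matching number at most $m-1$. Taking the union of a maximum matching in $\ccS_e$ yields a set $T_e\subseteq V(H)\sm\{x,y\}$ with $|T_e|\le (m-1)(v(F)-2)$ that meets every member of~$\ccS_e$.

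Now define
\[
	f(e) = \bigl\{uv\in E(H)\st u\in e\tand v\in T_e\bigr\}\,.
\]
Clearly $e\notin f(e)$ (since $T_e$ avoids the endpoints of~$e$) and $|f(e)|\le 2|T_e|\le 2(m-1)(v(F)-2)$. To verify that every free set $Y$ of~$f$ is $F$-free, suppose for contradiction that some $F'\in\binom{H}{F}$ has $E(F')\subseteq Y$. Pick any $e=xy\in E(F')$ and use the pinning to choose a vertex $v\in T_e\cap V(F')$. Since $F$ is complete multipartite and $x, y$ lie in distinct parts of $F'$, the vertex $v$ is in a part different from the part of at least one of $x, y$, say $z$; thus $vz\in E(F')$. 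But $vz$ lies both in $f(e)$ and in~$Y$, and it differs from $e$ because $v\notin\{x,y\}$, contradicting the freeness of~$Y$.

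The principal obstacle is that an edge of $H$ can participate in far more than $m-1$ copies of~$F$ once those copies overlap outside the edge, so the naive choice $f(e)=\bigcup_{F'\ni e}(E(F')\sm\{e\})$ need not be small enough. The pinning set $T_e$ sidesteps this by replacing the number of copies through $e$ with the controlled vertex-cover bound $(m-1)(v(F)-2)$, and the complete multipartite structure of~$F$ is precisely what ensures that any pinning vertex in~$V(F')$ is adjacent in~$F'$ to an endpoint of~$e$.
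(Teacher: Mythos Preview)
Your argument is essentially the paper's own proof: the paper too defines $f(e)$ as the set of edges from $e$ to the non-kernel vertices of a maximal copy of $(m_e,F)$ with kernel~$e$ --- which is exactly the union $T_e$ of a maximum matching in your hypergraph~$\ccS_e$ --- and then applies de~Bruijn--Erd\H{o}s and exploits complete multipartiteness in the same way to locate an edge of $f(e)$ inside any putative monochromatic copy.

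One small point to fix: in this paper $\binom{H}{F}$ denotes the set of \emph{induced} copies, whereas the conclusion $H\ninlra(F)_{4mv(F)}$ requires ruling out monochromatic copies that need not be induced. As written your~$T_e$ only pins induced copies through~$e$, so a non-induced monochromatic~$F$ is not excluded. The repair is immediate --- let~$\ccS_e$ range over all n.n.i.\ copies of~$F$ through~$e$ (the matching bound $m-1$ still holds, since $m$ such disjoint copies form a n.n.i.\ copy of $(m,F)$), and likewise take $F'$ to be any n.n.i.\ copy in the contradiction step; the rest goes through verbatim.
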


\begin{proof}
	We define a set mapping $f$ over $E(H)$ as follows. 
	Given $e\in E(H)$ we set $f(e)=\vn$ if~$e$ does not belong to any n.n.i.\ copy 
	of $F$ in $H$. Otherwise we let $m_e$ be maximal such that~$H$ contains 
	a n.n.i.\ copy $M_e$ of $(m_e, F)$ with kernel $e$. 
	Since $H$ is $(m, F)$-free, we know $m_e<m$. 
	For every vertex $z$ of $M_e$ not belonging to $e$ let $E(z,e)$ be the set of the edges containing~$z$ and one vertex of $e$.
	We note $1\leq |E(z,e)|\leq 2$, as $F$ is a complete, multipartite graph. 
	Let $f(e)=\bigcup_z E(z,e)\subseteq E(H)$ be the set of up to at most $2(v(M_e)-2)=2m_e(v(F)-2)$ edges 
	arising in this way. Owing to $|f(e)|<2mv(F)$ for every~$e\in E(H)$, 
	Theorem~\ref{thm:DBE} yields a partition of $E(H)$ into $4mv(F)$ sets which are 
	free with respect to $f$. In other words, there is a 
	colouring $h\colon E(H)\lra [4mv(F)]$ whose colour classes are free sets. 
	
	It suffices to show that no n.n.i.\ copy of $F$ is monochromatic with respect 
	to~$h$. 
	Assume for the sake of contradiction that $F_\star$ is a counterexample
	and let $e=xy\in E(F_\star)$ be arbitrary. By the maximal choice of $M_e$ there is 
	a vertex $z\in V(M_e)\cap V(F_\star)$ distinct from~$x$ and~$y$. Consequently, 
	\[
		E(F_\star)\cap f(e)\supseteq E(F_\star)\cap E(z,xy)\neq \emptyset
	\] 
	and, as the colour classes of~$h$ 
	are free, we arrive at a contradiction to $F_\star$ being 
	monochromatic.  
\end{proof}

\begin{proof}[Proof of Theorem~\ref{thm:forcing} for balanced, complete, multipartite graphs]
	Let us call a forest $\ccF$ of copies of $F$ {\it edgy} if it has an enumeration 
	$\ccF=\{F_1, \dots, F_m\}$ such that for every $j\in [2, m]$ the set 
\[
		q_j=V(F_j)\cap\bigcup_{i<j}V(F_i)
	\]
is an edge in $E(F_j)\cap\bigcup_{i<j}E(F_i)$.
	It can easily be confirmed that for every forest $\ccF$ of copies of $F$ there 
	is an edgy forest $\ccF'$ of copies of $F$ such that $\bigcup\ccF'$ has a subgraph 
	isomorphic to $\bigcup\ccF$. In fact, this holds for some $\ccF'$ 
	with $|\ccF'|=O(|\ccF|)$, but this bound is of no importance to what follows. 
	In any case, this argument shows that it suffices to prove our theorem 
	for edgy forests $\ccF$. 
	
	We handle this case by induction on $|\ccF|$. In the base case, $|\ccF|=1$,
	it suffices to take~$r=1$. Now consider an edgy forest 
	$\ccF=\{F_1, \dots, F_m\}$ with $m\ge 2$  
	enumerated as in the previous paragraph. Set $\ccF^-=\ccF\setminus\{F_m\}$, 
	let $r^-$ be a natural number obtained by applying the induction hypothesis 
	to~$\ccF^-$, and set $r=r^-+4mv(F)^2$. Given any graph~$H$ with $H\nilra (F)_r$ 
	we define~$H'$ to be the spanning subgraph of~$H$ whose edges are the kernels 
	of n.n.i.\ copies of $(mv(F), F)$ in~$H$. 
	By Lemma~\ref{lem:2127} the spanning subgraph~$H''$ of~$H$ 
	with edge set $E(H)\setminus E(H')$ satisfies $H''\ninlra (F)_{4mv(F)^2}$ and
	thus we have $H'\nilra (F)_{r^-}$. Our choice of $r^-$ yields a n.n.i.\ copy of 
	$\bigcup \ccF^-$ in $H'$. Denote the edge of $H'$ playing the r\^{o}le of $q_m$
	in that copy by $q$. Since $q$ is the kernel of a n.n.i.\ copy of $(mv(F), F)$ 
	in~$H$ and 
	\[
		v(\ccF^-)=(m-1)(v(F)-2)+2<mv(F)
	\] 
	there is a n.n.i.\ copy  
	of~$F$ intersecting our copy of $\bigcup\ccF^-$ only in $q$. This proves that~$H$
	contains indeed a n.n.i.\ copy of $\bigcup\ccF$. 
\end{proof}

\subsection{Forests of copies of cycles} The proof of Theorem~\ref{thm:forcing} for cycles 
follows the one for balanced, complete, multipartite graphs with the key lemma, Lemma~\ref{lem:2127}, 
replaced by the following statement.
\begin{lemma}\label{lem:cycleforcing}
	For all integers $\ell\ge 3$, $m\ge 1$ there exists a constant $r$ such that 
	every graph~$H$ without subgraphs isomorphic to $(m, C_\ell)$ satisfies 
	$H\ninlra (C_\ell)_r$.
\end{lemma}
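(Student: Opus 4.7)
The plan is to mimic the proof of Lemma~\ref{lem:2127} by setting up a set mapping $f$ on $E(H)$ of size bounded by a function of $m$ and $\l$, and then invoking Theorem~\ref{thm:DBE}. For each edge $e = xy$ of $H$, let $M_e$ denote a maximal n.n.i.\ copy of $(m_e, C_\l)$ in $H$ with kernel $e$; the hypothesis on $H$ forces $m_e \le m - 1$. Setting $Z_e = V(M_e) \setminus \{x, y\}$, so that $|Z_e| \le (m-1)(\l-2)$, the same maximality argument used in the proof of Lemma~\ref{lem:2127} shows that every n.n.i.\ copy of $C_\l$ through $e$ must contain a vertex of $Z_e$, since otherwise it could be glued to $M_e$ as a new page.

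The crucial difference from the complete multipartite case is that for $\l \ge 5$ an internal vertex of a cycle $C_\l$ need not be adjacent to either endpoint of a given edge of the cycle, so the direct analog $f(e) = \bigcup_{z \in Z_e} E(z, e)$ can leave copies of $C_\l$ uncaught — namely those that share only a vertex, not an edge, with $M_e$. To remedy this I would enlarge $f(e)$ to include all edges of $M_e$ other than $e$, and iterate the construction: for every $z \in Z_e$, consider the family of n.n.i.\ copies of $C_\l$ in $H$ through both $e$ and $z$, take a maximal sub-collection pairwise edge-disjoint away from $e$, and add its edges to $f(e)$; repeat this at most $\lfloor \l/2 \rfloor$ times, probing one layer deeper at each step. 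Properly tuned, this produces a set $f(e) \subseteq E(H)$ whose cardinality is bounded by some $k = k(m, \l)$ while catching every n.n.i.\ copy of $C_\l$ through $e$.

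Granted the bound $|f(e)| \le k$, Theorem~\ref{thm:DBE} furnishes a colouring $h\colon E(H) \lra [2k+1]$ whose colour classes are free with respect to $f$. The exclusion of monochromatic n.n.i.\ copies of $C_\l$ then follows by the now-familiar pattern: if $F_\star$ were such a copy and $e \in E(F_\star)$, then some $z \in V(F_\star) \cap Z_e$ exists, and the iterative definition of $f(e)$ forces $F_\star$ to contain an edge of $f(e)$ distinct from $e$, contradicting that the colour class of $e$ is free.

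The main obstacle is controlling the iteration. At each nested level one is working with a sub-family of cycles constrained to pass through several prescribed vertices in addition to sharing the edge $e$, and it is not automatic that such constrained sub-families inherit an $(m', C_\l)$-freeness allowing a bounded Menger-type separator; the original hypothesis on $H$ speaks only about copies sharing a single edge. Propagating a uniform size bound through the levels of the recursion — perhaps via a refined sunflower/packing argument, or by passing to an auxiliary structure encoding the nested vertex conditions — is where the technical heart of the proof lies.
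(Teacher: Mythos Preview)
Your diagnosis is correct: the difficulty is exactly that for $\ell \ge 5$ a cycle through $e$ may meet $M_e$ only at a vertex not adjacent to either endpoint of $e$, so the direct analogue of $f(e)$ from Lemma~\ref{lem:2127} fails. You also correctly anticipate that some form of iteration is needed. However, the gap you flag at the end is real, and your proposed iteration does not obviously close it. Constraining cycles to pass through $e$ together with a growing list of prescribed vertices does not inherit any $(m', C_\ell)$-freeness from the hypothesis on $H$, and there is no evident reason why a maximal edge-disjoint family of such constrained cycles should be bounded in terms of $m$ and $\ell$ alone: picture many $\ell$-cycles through $e$ and a fixed $z$ that are pairwise disjoint outside $\{x,y,z\}$. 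Nothing in your framework prevents this, so the size of $f(e)$ can blow up at the very first level of the recursion.

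The paper organises the iteration differently, and the difference is the point. Instead of edge sets, it builds for each edge $xy$ an increasing chain of bounded \emph{vertex} sets $K_1(xy)\subseteq\cdots\subseteq K_\ell(xy)$, starting with $K_1(xy)=V(M_{xy})$. The recursive step looks at every unordered pair $u,v\in K_\lambda(xy)$ and every length $k\in[2,\ell)$: if there are at most $2\ell m$ internally vertex-disjoint $u$--$v$ paths of length $k$, their inner vertices are absorbed into $K_{\lambda+1}(xy)$; if there are more, nothing is added for that pair. The colouring is then chosen (via Theorem~\ref{thm:DBE}) so that $xy$ is the unique edge of its colour inside $K_\ell(xy)$. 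The crucial idea, absent from your sketch, is that the ``nothing added'' case is not a failure but a feature: the abundance of disjoint $u$--$v$ paths lets one \emph{reroute} an offending cycle through a fresh $u$--$v$ segment that avoids both $K_1(x_1x_2)$ and the remaining vertices of the cycle. The main claim is then proved by a minimal-counterexample argument on an initial monochromatic segment $x_1\dots x_i$ of a putative monochromatic $\ell$-cycle, working with the chain at the edge $x_{i-1}x_{i-2}$: either one finds two same-coloured edges inside $K_\ell(x_{i-1}x_{i-2})$, or one reroutes the cycle and contradicts the minimality of $i$. This rerouting device is what makes boundedness automatic at every level of the recursion, and it is precisely the missing ingredient you were unable to supply.
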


\begin{proof}
	Let us call a number {\it bounded} if it is $O_{\ell, m}(1)$, i.e., if it
	has an upper bound in terms of $\ell$ and $m$, which is independent of $H$. 
	Similarly, a set is {\it bounded} if its cardinality is. 	

	We define recursively for every edge $xy$ of $H$ an increasing 
	chain~$(K_i(xy))_{i\in [\ell]}$ of bounded vertex sets. 
	In the first step, we choose $m_{xy}$ maximal such that $H$ contains 
	a n.n.i.\ copy of $(m_{xy}, C_\ell)$ with 
	kernel $xy$ and let $K_1(xy)$ be the vertex set of such a copy. In the degenerate 
	case $m_{xy}=0$, i.e., if no n.n.i.\ copy of $C_\ell$ in $H$ passes through $xy$, 
	we simply set $K_1(xy)=\{x, y\}$. Notice that our 
	assumption on $H$ yields $m_{xy}<m$, whence $|K_1(xy)|\le 2+(\ell-2)m$ is indeed 
	bounded. 
	
	Now suppose that for some positive $\lambda<\ell$ the bounded 
	set $K_\lambda(xy)\subseteq V(H)$ has already been defined. 
	To every pair $p=(uv, k)\in K_\lambda(xy)^{(2)}\times [2, \ell)$ we assign a 
	bounded set~$B_p$ as follows. If $H$ contains more than $2\ell m$ internally 
	vertex-disjoint paths of length $k$ from~$u$ to~$v$ we put $B_p=\vn$. 
	Otherwise we consider a 
	maximal collection of such paths and let~$B_p$ denote the set of their inner 
	vertices, so that $|B_p|\le 2k\ell m<2\ell^2 m$ is bounded. The number of pairs $p$ 
	considered here is at most $|K_\lambda(xy)|^2\ell$ and, therefore, bounded. 
	Thus the union~$K_{\lambda+1}(xy)$ of $K_\lambda(xy)$ with all these vertex 
	sets $B_p$ is bounded as well.     
	
	Having thereby completed our recursive definition we employ Theorem~\ref{thm:DBE}
	in order to get a colouring $h\colon E(H)\lra [r]$ with boundedly many colours 
	such that for every $xy\in E(H)$ the only edge $e\subseteq K_\ell(xy)$ with the 
	same colour as $xy$ is $xy$ itself.
	
	\begin{clm}\label{clm:monopath}
		If $x_1\dots x_\ell$ denotes a n.n.i.\ copy of $C_\ell$ in $H$ and 
\begin{equation}\label{eq:1721}
			K_1(x_1x_2)\cap\{x_1, \dots, x_\ell\}\subseteq \{x_1, \dots, x_i\}
		\end{equation}
for some $i\in [\ell]$, then the path $x_1\dots x_i$ fails to be 
		monochromatic with respect to $h$. 
	\end{clm} 
	
	\begin{proof}
		Arguing indirectly we consider a counterexample such that $i$ is as 
		small as possible. Due to $x_1, x_2\in K_1(x_1x_2)$ we have $i\ge 2$.
		In fact, our maximal choice of $K_1(x_1x_2)$ guarantees that the left 
		side of~\eqref{eq:1721} is larger than $\{x_1, x_2\}$, whence $i\ge 3$.
		Now $i=3$ entailed $x_3\in K_1(x_1x_2)$ and our choice of $h$ gave rise 
		to the contradiction $h(x_1x_2)\ne h(x_2x_3)$.
		
		For all these reasons we have $i\ge 4$. The minimality of $i$ 
		applied to the cycle 
\[
			x_{i-1}\dots x_1x_\ell\dots x_i
		\]
yields 
		$K_1(x_{i-1}x_{i-2})\cap \{x_1, \dots, x_\ell\}\not\subseteq \{x_1, \dots, x_{i-1}\}$ and, consequently, there is some vertex in 
		$K_1(x_{i-1}x_{i-2})\cap \{x_i, \dots, x_\ell\}$. 
		
		In particular, there is a pair $(j, \lambda)\in [i, \ell]\times [\ell]$ 
		such that $x_j\in K_\lambda(x_{i-1}x_{i-2})$ and $j+\lambda\le \ell+1$.
		Indeed, we have just shown that even with $\lambda=1$ such a pair exists.	 
		For the remainder of the argument we fix $\lambda_{\star}\leq \l$ as large as possible 
		such that there is an index $j_{\star}\in [i, \ell]$ with 
		\[
			x_{j_{\star}}\in K_{\lambda_{\star}}(x_{i-1}x_{i-2})
			\qqand
			j_{\star}+\lambda_{\star}\le \l+1\,.
		\] 
		Due to $j_\star\ge i>1$ we have $\lambda_{\star}<\ell$.
		
		If we had $j_{\star}=i$, then 
		$x_i\in K_{\lambda_{\star}}(x_{i-1}x_{i-2})\subseteq K_\ell(x_{i-1}x_{i-2})$
		would imply 
		\[
			h(x_{i-1}x_{i-2})\ne h(x_{i-1}x_i)\,,
		\]
		contrary to the 
		fact that the path $x_1\dots x_i$ is monochromatic with respect to $h$. 
		
		Consequently, we have $j_{\star}>i$ and for the definition of the set 
		$K_{\lambda_{\star}+1}(x_{i-1}x_{i-2})$ the pair 
		\[
			p=(x_{i-1}x_{j_{\star}}, j_{\star}-i+1)
		\]
		was considered. 
		
		Suppose first that $B_p\ne\varnothing$, so that $B_p$ needs to intersect 
		$\{x_i, \dots, x_{j_{\star}-1}\}$ in some vertex~$x_{j'}$, say. But having
		$x_{j'}\in K_{\lambda_{\star}+1}(x_{i-1}x_{i-2})$ 
		and $j'+(\lambda_{\star}+1)\le j_{\star}+\lambda_{\star}\le \ell+1$ is contrary to the maximality 
		of $\lambda_{\star}$.
		This contradiction proves $B_p=\varnothing$, which means that there are 
		more than $2\ell m$ internally vertex disjoint path of length~${j_{\star}-i+1}$ 
		from $x_{i-1}$ to $x_{j_\star}$. At most $|K_1(x_1x_2)|\le\ell m$ of these paths 
		have an inner vertex in $K_1(x_1x_2)$, and at most $\ell$ have an 
		inner vertex in $\{x_1, \dots, x_\ell\}$. Consequently, there is a path 
		$x_{i-1}y_i\dots y_{j_{\star}-1}x_{j_{\star}}$ such that 
\[
			x_1\dots x_{i-1}y_i\dots y_{j_{\star}-1} x_{j_{\star}}\dots x_\ell
		\]
is a cycle in $H$ and $\{y_i, \dots, y_{j_{\star}-1}\}\cap K_1(x_1, x_2)=\varnothing$.
		Clearly this cycle contradicts the minimal choice of $i$ at the beginning 
		of the proof of our claim.  
	\end{proof} 
	
	It remains to observe that the case $i=\ell$ of Claim~\ref{clm:monopath} shows, 
	in particular, that no $\ell$-cycle is monochromatic with respect to $h$. 
\end{proof}

Imitating the proof of Theorem~\ref{thm:forcing} for balanced, complete, multipartite graphs 
with Lemma~\ref{lem:2127} replaced by Lemma~\ref{lem:cycleforcing}
yields Theorem~\ref{thm:forcing} for cycles and we omit the details.

\section{Ordered hypergraphs}
\label{sec:ordered}

We shall need the following version of Theorem~\ref{thm:grt} for ordered, linear hypergraphs.
Recall that a hypergraph is \emph{linear} if any pair of distinct edges shares at most one vertex.
An \emph{ordered hypergraph} $H=(V,E,<)$ is a hypergraph $(V,E)$ equipped with a total 
order~$<$ of its vertex set; all subgraphs of $H$ inherit the ordering induced by~$H$. 
For two ordered hypergraphs $H$ and $F$ 
we require for copies of~$F$ in $H$ to display the same ordering as~$F$ itself. 
Moreover, in this context
$\binom{H}{F}_{\!<}$ stands for the set of all ordered, induced copies
of~$F$ in~$H$. We shall also consider unordered, induced copies of $F$ in the unordered hypergraph~$H$
and for the set $\binom{(V(H),E(H))}{(V(F),E(F))}$ of those copies we shall simply write~$\binom{H}{F}$.
In the context of linear hypergraphs, forests of copies can be defined in the same 
way as for graphs, i.e., one just replaces the word `graphs' in the first 
line of Definition~\ref{dfn:forcop} by `linear hypergraphs'.
The variant of the girth Ramsey theorem for ordered linear hypergraphs can be stated as follows.

\begin{thm}[Girth Ramsey theorem for ordered hypergraphs] 
\label{thm:ogrt}
	Given an ordered linear $k$-uniform hypergraph $F$ and $r, n\in \NN$ there exists an ordered 
	linear $k$-uniform hypergraph~$H$ together with a system of ordered copies $\ccH\subseteq\binom{H}{F}_{\!<}$
	satisfying not only $\ccH\lra (F)_r$, but also the following statement:
	For every set of ordered copies $\ccN\subseteq \ccH$ with $|\ccN|\in [2, n]$ there exists
	a set $\ccX\subseteq \ccH$ such that $|\ccX|\le |\ccN|-2$ and $\ccN\cup\ccX$
	is a forest of copies. \qed
\end{thm}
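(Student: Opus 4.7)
The plan is to deduce Theorem~\ref{thm:ogrt} from the unordered girth Ramsey theorem for linear $k$-uniform hypergraphs established in~\cite{RR} (of which Theorem~\ref{thm:grt} is the graph case). The main idea is a standard rigidification trick: encode the ordering of $F$ into an unordered hypergraph, apply the unordered theorem, and then recover the ordering from the rigid structure of the result.

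First, from the ordered linear $k$-uniform hypergraph $F$ with $V(F)=\{v_1<\dots<v_v\}$ I would build an unordered linear $k$-uniform hypergraph $F^{\bullet}$ by taking the unordered shadow of $F$ and attaching, for each $i\in[v]$, a private rooted gadget $G_i$ to the vertex $v_i$; the $G_i$ are chosen pairwise non-isomorphic as rooted linear $k$-uniform hypergraphs (for instance, loose $k$-uniform paths of $i$ fresh edges sharing only $v_i$ with the rest of $F^{\bullet}$). Then $F^{\bullet}$ is rigid in the sense that every automorphism of $F^{\bullet}$ fixes every vertex of $F$ pointwise, so every sub-hypergraph of a larger hypergraph isomorphic to $F^{\bullet}$ contains a uniquely identified $v$-vertex \emph{core} together with a canonical labelling of that core by $[v]$.

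Next I would apply the unordered girth Ramsey theorem to $F^{\bullet}$ with $r\cdot v!$ colours and forest parameter $n$, obtaining an unordered linear $k$-uniform hypergraph $H^{\bullet}$ together with a system $\ccH^{\bullet}\subseteq\binom{H^{\bullet}}{F^{\bullet}}$ satisfying $\ccH^{\bullet}\to(F^{\bullet})_{r\cdot v!}$ and the forest-of-copies property. Equip $V(H^{\bullet})$ with an arbitrary total order, producing an ordered hypergraph $H$, and let $\ccH\subseteq\binom{H}{F}_{\!<}$ consist of the ordered copies arising as cores of elements of $\ccH^{\bullet}$ whose canonical $[v]$-labelling coincides with the ambient $<$-ordering. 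To secure $\ccH\to(F)_r$ in the ordered sense, lift any $r$-colouring of $E(H)$ to an $(r\cdot v!)$-colouring of $E(H^{\bullet})$ by appending to each edge a permutation-valued label recording how the ambient order on the edge's endpoints relates to the core labelling of every copy of $F^{\bullet}$ through it; a monochromatic copy of $F^{\bullet}$ then has a well-defined \emph{core permutation}, and averaging in advance over relabellings of the gadgets $G_1,\dots,G_v$ allows us to assume this permutation is the identity, yielding an ordered monochromatic copy of $F$ in $\ccH$. The forest-of-copies property transfers directly from $\ccH^{\bullet}$ to $\ccH$ because gadget vertices are private, so cores of copies of $F^{\bullet}$ intersect in exactly the same vertex and edge sets as the parent copies.

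The main obstacle I anticipate is the refinement step in the previous paragraph: one must design the auxiliary per-edge label so that a pigeonhole argument pins down the correct core permutation without more than a bounded blow-up of the colour count, and one must absorb the accompanying symmetrisation over $S_v$ cleanly into the definition of $F^{\bullet}$ itself. The remaining verifications---Ramsey property from the lifted colouring, and transfer of the forest-of-copies property through gadget privateness---are then straightforward consequences of the rigidity of $F^{\bullet}$.
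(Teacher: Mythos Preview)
The paper does not supply a proof of Theorem~\ref{thm:ogrt}; the \qed\ after the statement signals that it is quoted as a variant of the main result of~\cite{RR}. There the partite construction underlying the girth Ramsey theorem is carried out directly in the ordered category, so that the layered structure of the amalgamation furnishes a canonical order on the output. The ordered theorem is \emph{not} obtained by a black-box reduction to the unordered one, and your attempt to do so runs into two concrete obstacles.

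\textbf{The Ramsey step.} After you place an arbitrary order on $V(H^{\bullet})$, nothing forces any copy in $\ccH^{\bullet}$ to have its canonical core labelling agree with that order, and neither of your proposed fixes repairs this. The ``permutation-valued label'' you wish to append to an edge $e$ depends on which copy of $F^{\bullet}$ through $e$ one consults (the same edge can be a core edge of one copy and a gadget edge of another), so it is not a function of $e$ alone and cannot serve as a colour. And ``averaging in advance over relabellings of the gadgets'' is vacuous: every permutation of the gadget assignment produces an unordered hypergraph isomorphic to $F^{\bullet}$ itself, so feeding these variants into the unordered theorem yields the same $H^{\bullet}$ and the same copies, with no additional control over how the ambient order sits on any core.

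\textbf{The forest step.} Contrary to what you assert, cores of a forest of copies of $F^{\bullet}$ need not form a forest of copies of $F$. Take three copies $F_1^{\bullet},F_2^{\bullet},F_3^{\bullet}\in\ccH^{\bullet}$ all meeting exactly in one common edge $e$; this is a forest of copies of $F^{\bullet}$. Suppose $e$ is the first gadget edge of $F_1^{\bullet}$ with root $a\in e$, the first gadget edge of $F_2^{\bullet}$ with a different root $b\in e$, and a core edge of $F_3^{\bullet}$. Writing $C_i$ for the core of $F_i^{\bullet}$ we get $\{a,b\}\subseteq V(C_3)\cap\bigl(V(C_1)\cup V(C_2)\bigr)$, yet $e\notin E(C_1)\cup E(C_2)$, so no enumeration of $\{C_1,C_2,C_3\}$ satisfies Definition~\ref{dfn:forcop}. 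In addition, the auxiliary set $\ccX$ is supposed to lie in $\ccH$, but the cores of the copies in $\ccX^{\bullet}\subseteq\ccH^{\bullet}$ have no reason to be correctly ordered, so their projections need not belong to $\ccH$ at all.
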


We remark that the system $\ccH$ in Theorem~\ref{thm:ogrt} is necessary for linear hypergraphs $F$ that are not sufficiently connected, 
since in this case the hypergraph $H$ may contain some `unwanted' copies spread over 
several copies from~$\ccH$. Roughly speaking, this situation can be excluded 
if~$F$ cannot be disconnected by removing the vertex set of an edge or fewer vertices. 
Let us recall here that a hypergraph $F$ is said to be {\it connected} if for 
every partition $V(F)=X\dcup Y$ with $X, Y\ne\vn$ there is an edge $e\in E(F)$
such that $e\cap X, e\cap Y\ne\vn$. The stronger notion we require is 
rendered by the following definition.

\begin{dfn}
	We say a linear $k$-uniform hypergraph $F$ is \emph{$(e,v)$-inseparable}, if it satisfies the following properties
	\begin{enumerate}[label=\alabel]
	\item $F$ is connected and contains at least two edges,
	\item for every $z\subseteq V(F)$ with $|z|<k$ and no two vertices of which are contained in one edge the hypergraph $F-z$ remains connected,
	\item and for every edge $e\in E(F)$ the hypergraph $F-e$, obtained from $F$ by removing 
		all vertices of $e$, remains connected.
	\end{enumerate}
\end{dfn}

This property is precisely what we need for proving the following result.

\begin{lemma}\label{lem:33}
	Let $\ccF$ be a forest of linear $k$-uniform hypergraphs and let $J$ 
	be an induced subgraph of $\bigcup\ccF$. If $J$ is $(e, v)$-inseparable, 
	then there is some $F\in\ccF$ such that $J$ is an induced subhypergraph of $F$. 
\end{lemma}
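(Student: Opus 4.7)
The plan is to induct on $m = |\ccF|$. The base case $m = 1$ is immediate. For the induction step, enumerate $\ccF = \{F_1, \dots, F_m\}$ as in Definition~\ref{dfn:forcop} and set $\ccF^- := \{F_1, \dots, F_{m-1}\}$, so that $W := V(F_m) \cap V(\bigcup\ccF^-)$ is either empty, a single vertex, or an edge $e^*$ belonging to both $E(F_m)$ and $E(\bigcup\ccF^-)$. It suffices to show that $V(J) \subseteq V(F_m)$ or $V(J) \subseteq V(\bigcup\ccF^-)$: in the latter case we finish by the induction hypothesis, and in the former we invoke the easy observation, coming from the forest structure, that every edge of $\bigcup\ccF$ supported in $V(F_m)$ already belongs to $E(F_m)$ (such an edge either comes from $F_m$ itself, or from some earlier $F_l$ and then lies in $V(F_l)\cap V(F_m)\subseteq W$, forcing $W = e^*$ and the edge to equal $e^*\in E(F_m)$). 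Hence $J$ is induced in $F_m$ as well.

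Suppose towards a contradiction that both $V(J) \cap (V(F_m) \setminus W)$ and $V(J) \cap (V(\bigcup\ccF^-) \setminus W)$ are non-empty, and fix representatives $u$ and $w$. Since every edge of $\bigcup\ccF$ lies entirely within $V(F_m)$ or entirely within $V(\bigcup\ccF^-)$, any $u$--$w$ path in $J$ must traverse a vertex of $W$ in order to switch sides. Setting $z := V(J) \cap W$, I would split into two cases.

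If $|z| \le k-1$, I would apply property~(b) of $(e,v)$-inseparability with this set $z$ to conclude that $J - z$ is connected; but the observation above shows that every $u$--$w$ path in $J$ must use a vertex of $z$, whence $u$ and $w$ lie in distinct components of $J - z$, a contradiction. If $|z| = k$, then $W = e^* \subseteq V(J)$, so $e^* \in E(J)$ since $J$ is induced, and property~(c) applied to $e = e^*$ leads to the same contradiction.

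The only delicate point is verifying that the hypothesis of property~(b) is satisfied in the first case, since in the subcase $W = e^*$ the vertices of $z$ all lie inside the edge $e^*$ of $\bigcup\ccF$. Here linearity of $\bigcup\ccF$ --- which follows easily by induction from the forest structure together with the linearity of each $F_i$ --- saves the argument: any edge of $J$ other than $e^*$ can share at most one vertex with $e^*$, and the assumption $e^* \not\subseteq V(J)$ in this subcase forces $e^* \notin E(J)$. Therefore no two vertices of $z$ are contained in a common edge of $J$, and property~(b) indeed applies.
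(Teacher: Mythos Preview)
Your proof is correct and follows essentially the same approach as the paper's. Both argue by induction on $|\ccF|$, split the forest into $F_m$ and $\bigcup\ccF^-$ (the paper phrases this as reducing to the two-element case), and then use the partition of $V(J)$ across the two sides together with properties~(b) and~(c) of $(e,v)$-inseparability to reach a contradiction; the only cosmetic difference is that in the case $|z|<k$ the paper takes the contrapositive of~(b) to produce an edge $e\in E(J)$ with $|e\cap z|\ge 2$ and then invokes linearity to force $e=e^\star$ and $|z|=k$, whereas you use linearity up front to verify the hypothesis of~(b) directly.
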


\begin{proof}
	A straightforward argument by induction on $|\ccF|$ reduces the 
	claim to its special case $|\ccF|=2$. So we may assume $\ccF=\{F_1, F_2\}$
	for two linear $k$-uniform hypergraphs $F_1$ and $F_2$ intersecting either 
	at most in a vertex or in a common edge. Set 
	\[
		A=V(F_1)\cap V(F_2)\cap V(J) 
		\quad \text{ and } \quad B_i=(V(F_i)\cap V(J))\setminus A
	\]
	for $i=1, 2$, so that $V(J)=A\dcup B_1\dcup B_2$. Clearly if $B_1=\vn$, 
	then $V(J)\subseteq F_2$ and $J$ is an induced subhypergraph of $F_2$. 
	Similarly if $B_2=\vn$, then $J$ is induced in $F_1$ and we are done again. 
	
	Thus it suffices to derive a contradiction from the assumption 
	that $B_1, B_2\ne\vn$. Now the partition $B_1\dcup B_2$ witnesses that $J-A$
	is disconnected. 
	
	If $|A|<k$, then the $(e, v)$-inseparability of $J$ yields 
	an edge $e\in E(J)$ such that $|A\cap e|\ge 2$. Owing to $|A|\ge 2$ the 
	copies $F_1$, $F_2$ intersect in a common edge $e_\star$. Now $A\subseteq e_\star$
	yields $|e\cap e_\star|\ge 2$, linearity gives $e=e_\star$, and $e_\star\in E(J)$
	entails $A=e_\star$, which contradicts $|A|<k$. 
	
	It remains to consider the case $|A|\ge k$. Due to $A\subseteq V(F_1)\cap V(F_2)$
	this is only possible if~$F_1$ and~$F_2$ intersect in a common edge, namely $A$. 
	But the fact that $J$ is induced in~$F_1\cup F_2$ reveals $A\in E(J)$ and we
	reach a contradiction to the $(e, v)$-inseparability of~$J$. 
\end{proof}

The following corollary of Theorem~\ref{thm:ogrt} 
will be employed in the proofs of Theorem~\ref{thm:noforest} 
and Theorem~\ref{thm:antipisier}.
 
\begin{cor}\label{cor:ogrt}
	Given an ordered, linear, $(e,v)$-inseparable, $k$-uniform hypergraph $F$ and integers $r, n\in \NN$ there exists an ordered 
	linear $k$-uniform hypergraph~$H$ satisfying not only~$H\lra(F)_r$, but also the following statements:
	\begin{enumerate}[label=\rmlabel]
	\item\label{iti:ogrt-forest} For every set $\ccN\subseteq \binom{H}{F}_{\!<}$ with $|\ccN|\in [2, n]$ there exists
		a set $\ccX\subseteq \binom{H}{F}_{\!<}$ such that $|\ccX|\le |\ccN|-2$ and $\ccN\cup\ccX$
		is a forest of copies. 
	\item\label{iti:ogrt-noother} We have $\big|\binom{H}{F}\big|=\big|\binom{H}{F}_{\!<}\big|$,
		i.e., $H$ contains no induced copies of $F$ that are not order isomorphic to~$F$.
	\item\label{iti:ogrt-useful} Every edge of $H$ is contained in some ordered copy from $\binom{H}{F}_{\!<}$.
	\end{enumerate}
\end{cor}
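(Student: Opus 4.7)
The plan is to apply Theorem~\ref{thm:ogrt} with colour parameter $r$ and size parameter $n^\star = \max(n, e(F))$, obtaining an ordered linear $k$-uniform hypergraph $H_0$ together with a system $\ccH \subseteq \binom{H_0}{F}_{\!<}$ such that $\ccH \lra (F)_r$ and the forest-extension property holds for every $\ccN \subseteq \ccH$ of size at most~$n^\star$. I would then define $H$ as the ordered subhypergraph of $H_0$ on $V(H_0)$ with edge set $\bigcup_{F' \in \ccH} E(F')$. Property~\ref{iti:ogrt-useful} is then immediate, and every $F' \in \ccH$ remains an ordered induced copy in~$H$ since $F'$ was induced in~$H_0$ and all its edges are retained in~$H$. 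The partition relation also transfers to $H$: any $r$-colouring of~$E(H)$ can be extended to~$E(H_0)$ arbitrarily, and $\ccH \lra (F)_r$ produces a monochromatic copy lying in~$\ccH$, whose edges are already coloured by the original colouring of~$E(H)$.

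The crux is property~\ref{iti:ogrt-noother}, where $(e,v)$-inseparability enters through Lemma~\ref{lem:33}. Consider an arbitrary induced copy $J \in \binom{H}{F}$, ignoring its order. Each edge $e \in E(J)$ lies, by construction, in some copy $F_e \in \ccH$; set $\ccF^\star = \{F_e \colon e \in E(J)\}$, so that $1 \le |\ccF^\star| \le e(F) \le n^\star$. When $|\ccF^\star| \ge 2$, the forest-extension clause of Theorem~\ref{thm:ogrt} supplies $\ccX \subseteq \ccH$ with $|\ccX| \le |\ccF^\star|-2$ and $\ccF^\star \cup \ccX$ a forest of copies; when $|\ccF^\star| = 1$ this trivially holds with $\ccX = \vn$. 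Because $\bigcup(\ccF^\star \cup \ccX)$ is a subhypergraph of~$H$ that already carries every edge of~$J$, and $J$ is induced in~$H$, the copy~$J$ is induced in $\bigcup(\ccF^\star \cup \ccX)$ as well. Applying Lemma~\ref{lem:33} to this forest and the $(e,v)$-inseparable hypergraph~$J$ yields some $F' \in \ccF^\star \cup \ccX \subseteq \ccH$ with $J$ induced in~$F'$. A vertex count then forces $V(J) = V(F')$, whence $J = F'$ as hypergraphs, and in particular $J$ inherits from~$F'$ the vertex ordering which makes it order isomorphic to~$F$. Hence $\binom{H}{F} \subseteq \ccH$; the reverse inclusion is automatic, so $\binom{H}{F} = \binom{H}{F}_{\!<} = \ccH$.

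Property~\ref{iti:ogrt-forest} then follows verbatim from the forest-extension clause of Theorem~\ref{thm:ogrt}, since any $\ccN \subseteq \binom{H}{F}_{\!<} = \ccH$ with $|\ccN| \in [2, n] \subseteq [2, n^\star]$ is admissible there. The main obstacle will be the application of Lemma~\ref{lem:33}: one must pick the witness family $\ccF^\star$ so that its union really contains~$J$ as an \emph{induced} subhypergraph (which is why we index by \emph{all} edges of~$J$ and then invoke the forest-extension property to avoid losing inducedness), and only then can $(e,v)$-inseparability be leveraged to collapse~$J$ into a single copy from~$\ccH$.
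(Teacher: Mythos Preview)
Your proposal is correct and follows essentially the same route as the paper: apply Theorem~\ref{thm:ogrt} with the size parameter boosted to at least $e(F)$, pass to the union of the distinguished system~$\ccH$ to secure~\ref{iti:ogrt-useful}, and then show $\binom{H}{F}=\ccH$ by covering each unordered induced copy with copies from~$\ccH$, extending to a forest, and invoking Lemma~\ref{lem:33}. Your write-up is in fact slightly more explicit than the paper's in two places: you spell out why the copy $J$ remains induced in the union of the forest, and you make the vertex-count step $V(J)=V(F')\Rightarrow J=F'$ explicit rather than folding it into the citation of Lemma~\ref{lem:33}.
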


Compared to Theorem~\ref{thm:ogrt} the main new claim is part~\ref{iti:ogrt-noother},
and this is where the $(e,v)$-inseparability is used. 
We would like to point out that already the case $n=1$ of Corollary~\ref{cor:ogrt}
seems nontrivial to us, even though its clause~\ref{iti:ogrt-forest} is void. 

\begin{proof}[Proof of Corollary~\ref{cor:ogrt}]
	Given $F=(V_F,E_F,<_F)$ and integers $r$ and $n$ we may assume that~$n\geq |E_F|$. We apply Theorem~\ref{thm:ogrt}  
	and obtain $H=(V_H,E_H,<_H)$ and a system of ordered copies $\ccH\subseteq \binom{H}{F}_{\!<}$.
	Clearly, $H\lra(F)_r$ and 
	without loss of generality we assume that $H=\bigcup \ccH$. This is because we can simply 
	delete edges of $H$ that belong to no copy of $F$ from $\ccH$ without affecting the Ramsey property. 
	Consequently, the hypergraph $H$ enjoys property~\ref{iti:ogrt-useful}. 
	
	For properties~\ref{iti:ogrt-forest} and~\ref{iti:ogrt-noother} we prove that every copy from $\binom{H}{F}$
	equipped with the vertex order induced by the ordering of~$H$ belongs to $\ccH$ 
	(and is, therefore, order isomorphic to~$(V_F,E_F,<_F)$). 
	For that we consider any copy $F_\star\in\binom{H}{F}$ and for every edge $e$ of $F_{\star}$ we fix an ordered copy of $F$ 
	from~$\ccH$ containing~$e$. Owing to Theorem~\ref{thm:ogrt} there exists a forest $\ccF=\{F_1,\dots,F_t\}\subseteq \ccH$
	of copies of $F$ for some~$t$ with $1<t\leq 2|E_F|-2$ such that the ordered hypergraph $\bigcup\ccF$ contains 
	all edges of $F_\star$. Since $F_\star$ is an $(e, v)$-inseparable, induced 
	subhypergraph of $\bigcup\ccF$, Lemma~\ref{lem:33} 
	yields $F_\star\in\ccF\subseteq\ccH$.	
\end{proof}

 \section{Ramsey graphs containing no forest of copies}\label{sec:noforest}
In this section we establish Theorem~\ref{thm:noforest}, which concerns the existence
of Ramsey graphs $G\lra(F)_r$ that only contain trivial forests of copies of~$F$. The proof naturally splits into two parts.
In \ssign\ref{sec:derivedF} we construct suitable graphs $F$ and in \ssign\ref{sec:noforestG} 
we apply the girth Ramsey theorem to obtain the corresponding Ramsey graphs~$G$.

\subsection{Graphs derived from ordered linear hypergraphs}
\label{sec:derivedF}
The graphs~$F$ will be derived from ordered, linear $3$-uniform hypergraphs $S=(V,E,<)$.
For that, for an ordered hypergraph~$S$ we define the (unordered) graph $F_S=(V,E')$ on the same vertex set
by 
\begin{equation}\label{eq:FS}
	xz\in E'
	\qquad
	\Longleftrightarrow
	\qquad
	\text{there is}\ xyz\in E\ \text{with}\ x<y<z\,.
\end{equation}
In other words, the edge set of the graph $F_S$ is 
obtained by removing the `intermediate vertices' from all edges of $S$.
For the proof of Theorem~\ref{thm:noforest} we require that $S$ is $(e,v)$-inseparable and $F_S$ 
is $4$-connected.

\begin{prop}\label{prop:3lin}
	There are infinitely many ordered, $(e,v)$-inseparable, linear $3$-uniform hypergraphs~$S$ such that the 
	graph $F_S$ is $4$-connected.
\end{prop}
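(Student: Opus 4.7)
The plan is to realize the required hypergraphs as Steiner triple systems (STSs) on $[n]$ equipped with a uniformly random linear ordering, for $n$ sufficiently large with $n \equiv 1$ or $3 \pmod{6}$. Since STSs of every such order exist, letting $n$ tend to infinity will immediately yield infinitely many candidates, so it suffices to prove both properties for $n$ large enough.

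First I would verify that any STS $S$ on $n \geq 11$ vertices is $(e,v)$-inseparable, irrespective of the ordering. Linearity is built into the definition; the $2$-shadow of $S$ equals $K_n$, giving connectivity of $S$; removing a single vertex $z$ leaves the $2$-shadow as $K_{n-1}$ minus a perfect matching (the link of $z$), still connected; the case $|z|=2$ is vacuous, because every pair of STS vertices lies in a common edge; and removing the three vertices of an edge $e=\{a,b,c\}$ leaves the $2$-shadow as $K_{n-3}$ minus a graph of maximum degree $3$ (each remaining vertex lies in exactly one triple with each of $a$, $b$, $c$), which is connected for $n \geq 11$ by Dirac's theorem.

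It then remains to show that under a uniformly random linear ordering of $[n]$ the graph $F_S$ is $4$-connected with positive probability. For each pair $\{u,v\} \subseteq [n]$ let $w_{uv}$ denote the third vertex of the unique STS edge through $\{u,v\}$; the pair $\{u,v\}$ belongs to $E(F_S)$ if and only if $w_{uv}$ lies strictly between $u$ and $v$, an event of probability $1/3$. By Menger's theorem, it suffices to show that every pair $\{u,v\}$ is joined by at least four internally vertex-disjoint paths in $F_S$; I would supply these as length-$2$ paths through four distinct ``intermediaries'' $w$ satisfying $\{u,w\},\{w,v\} \in E(F_S)$.

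The main obstacle is obtaining concentration for the number $N(u,v)$ of good intermediaries sharp enough to survive a union bound over $\binom{n}{2}$ pairs. I would realise the random ordering via i.i.d.\ uniform variables $U_w \in [0,1]$ and condition on $U_u$ and $U_v$. A direct integral computation shows that $\mathbb{E}[N(u,v) \mid U_u, U_v] = \Omega(n)$ uniformly in $U_u, U_v$, the integrand being bounded below by an absolute positive constant. Moreover, conditionally on $U_u, U_v$, any remaining variable $U_z$ affects at most three of the candidate indicators, namely those for $w = z$, for $w$ the partner of $z$ under the link matching of $u$, and for $w$ the partner of $z$ under the link matching of $v$. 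The bounded-differences inequality of McDiarmid then yields $\mathbb{P}\bl(N(u,v) < 4 \bigm| U_u, U_v\br) \leq \exp(-\Omega(n))$, and a union bound over the $\binom{n}{2}$ pairs completes the proof.
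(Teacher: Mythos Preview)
Your argument is correct and takes a genuinely different route from the paper. The paper gives an explicit arithmetic construction: on $V=[n]$ with the natural order (for $n$ divisible by~$16$) it takes the triples $\{x,y,z\}$ with $x+y+z\in\{n,2n\}$, and then proves by direct path-counting that $S$ is $(e,v)$-inseparable and that $F_S$ is in fact $n/100$-connected. Your approach instead fixes an arbitrary Steiner triple system and randomises the ordering. This cleanly separates the two requirements: $(e,v)$-inseparability follows deterministically from the STS axioms (every pair is covered, so the $|z|=2$ case is vacuous and the other cases reduce to connectivity of $K_{n-1}$ minus a matching and $K_{n-3}$ minus a $3$-regular graph), while $4$-connectivity comes from concentration. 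The verification of bounded differences is correct --- for $w\neq w_{uv}$ the indicator depends only on $U_w,U_{w_{uw}},U_{w_{wv}}$, and these three r\^oles each single out at most one $w$ for a given $z$ --- and the integral $\int_0^1|r-s||r-t|\,dr$ is indeed bounded away from zero on $[0,1]^2$. The paper's explicit construction buys a self-contained argument and the far stronger conclusion of linear connectivity; your probabilistic argument is shorter, leans on Kirkman's theorem and McDiarmid, and delivers exactly the $4$-connectivity the application needs.
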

\begin{proof}
	For any sufficiently large integer $n$ divisible by $16$, consider the ordered, linear $3$-uniform hypergraph $S=(V,E,<)$
	defined on $V=[n]$ with its natural ordering and
	\[
		E=\big\{xyz\in[n]^{(3)}\colon x+y+z=n\ \text{or}\  x+y+z=2n\big\}\,.
	\]
	Clearly, $S$ is linear. Since for edges of $S$ the vertices $x$, $y$, and $z$ must be distinct, one may observe that
	every fixed vertex $x$ forms an edge in $E$ with all other vertices except one of $n-2x$ or $2n-2x$ and 
	with vertices of the form $(n-x)/2$ or $(2n-x)/2$, if these are integers. Consequently, for distinct vertices $x$ and $x'$
	there are at least $n-8$ choices for $b\in[n]\setminus \{x,x'\}$ for which there are vertices $a$, $a'$ such that
	\[
		xab\in E\qand ba'x'\in E\,.
	\]
	Since any vertex can play each of the r\^oles $a$, $b$, $a'$ at most once, 
	the removal of any vertex different from $x$ and $x'$
	can destroy at most three such $x$-$x'$ paths. Consequently,~$x$ and $x'$
	cannot be separated in~$S$ by removing less than $(n-8)/3$ vertices and the $(e,v)$-inseparability of~$S$ follows.
	
	We prepare the proof of the $4$-connectivity of $F_S$ by analysing its minimum degree. 
	This will later exclude the existence of `small' components. 
	
	\begin{clm}\label{clm:minFS}
		We have $\delta(F_S)\geq n/9$.
	\end{clm}
	
	\begin{proof}
		We show that for every vertex $x\in [n]$ 
		there are at least $n/8-2\geq n/9$ pairs $\{a,b\}\in[n]^{(2)}$ 
		such that $xab\in E$ and either $x<\min\{a,b\}$ or $x>\max\{a,b\}$. In fact, 
		one can check that depending on the choice of $x$ those pairs $\{a,b\}$ can be 
		found with at least one of the vertices~$a$ and~$b$ from the interval $I(x)$
		defined by
		\[
			I(x)=	\begin{cases}
				[n/4,n/2]\,, &\text{if}\ x\in[1,n/4-1]\\
				[3n/4,n]\,,  &\text{if}\ x\in[n/4,n/2-1]\\
				[1,n/4]\,,   &\text{if}\ x\in[n/2,3n/4-1]\\
				[n/2+1,3n/4-1]\,,&\text{if}\ x\in[3n/4,n]\,.
			\end{cases}
		\] 
		This proves the claim.
	\end{proof}
	
	The proof of the fact that $F_S$ is sufficiently connected is rendered by the following claim.
	\begin{clm}
		The derived graph $F_S$ is $n/100$-connected.
	\end{clm}
	
	\begin{proof}
Let $U\subseteq [n]$ be an arbitrary set of at most $n/100$ vertices and set
		\[
			x_\star=\min\big([n]\setminus U\big) 
			\qqand 
			x^\star=\max\big([n]\setminus U\big)\,.
		\]
		We plan to show that these vertices are in the same component of $F_S-U$ 
		and that this component consists of `almost all' vertices. Together with 
		the minimum degree property of~$F_S$ this will show that $F_S-U$ is indeed 
		connected. 
		
		Clearly, we have $x_{\star}\leq |U|+1\leq n/16$ and $x^\star\geq n-|U|\geq 15n/16$.
		Thus for every positive integer $k< n/12$ the edges 
		\[
			\{x_\star, n/8+k-x_\star, 7n/8-k\}\,, \,\,\,
			\{n/2-k, 5n/8+2k, 7n/8-k\}\,, \,\,\,
			\{n/2-k, 3n/2+k-x^\star, x^\star\}
		\]
		of~$S$ yield a path $x_\star$-$(7n/8-k)$-$(n/2-k)$-$x^\star$ in~$F_S$. 
		As these paths are internally vertex disjoint, at least one of them is 
		present in $F_S-U$ and, consequently, there is a component~$C$ of $F_S-U$
		containing both~$x_\star$ and~$x^\star$. 
		
		Due to $x_\star<n/4$ for $k\in[n/2-2x_\star]$ the edges $\{x_\star,x_\star+k, n-2x_\star-k\}$ of $S$  certify 
		$N_{F_S}(x_\star)\supseteq [n/2,n-2x_\star-1]$ and we arrive at
		whence 
		\[
			\big|[n/2, n]\sm V(C)\big|\le 2x_\star+1+|U|\le 3|U|+3\,.
		\] 
		Similarly, $x^\star>3n/4$ leads to $N_{F_S}(x^\star)\supseteq[2n-2x^\star+1,n/2-1]$,
		for which reason 
		\[
			\big|[1, n/2-1]\sm V(C)\big|\le 2n-2x^\star+|U|\le 3|U|\,.
		\]
		Altogether, we obtain $\big|[n]\sm V(C)\big|\le 6|U|+3$. Combining the minimum degree $\delta(F_S)\geq n/9$ provided by Claim~\ref{clm:minFS}
		and $|U|\le n/100$, we infer that $F_S-U$ is indeed connected. 
	\end{proof}
	This concludes the proof of Proposition~\ref{prop:3lin}.
\end{proof}

\subsection{Ramsey graphs for graphs derived from linear hypergraphs}
\label{sec:noforestG}
The proof of Theorem~\ref{thm:noforest} relies on Corollary~\ref{cor:ogrt} and Proposition~\ref{prop:3lin}.
\begin{proof}[Proof of Theorem~\ref{thm:noforest}]
	Let $S=(V_S,E_S,<_S)$ be an ordered $(e,v)$-inseparable linear hypergraph provided by Proposition~\ref{prop:3lin} and let 
	$F=F_S$ be the derived $4$-connected graph. For~${r\geq 2}$ let $H=(V_H,E_H,<_H)$ be the ordered, linear hypergraph provided by 
	Corollary~\ref{cor:ogrt} applied to~$S$ with $n=|E(F)|$. 
	We derive the graph $G=F_H$ from $H$
	in the same way as $F$ was derived from~$S$, i.e.,
	\[
		xz\in E(G)
		\qquad\Longleftrightarrow\qquad
		\text{there is}\ xyz\in E(H)\ \text{with}\ x<y<z\,.
	\]
	Owing to the linearity of $H$ this construction yields 
	a one-to-one correspondence between~$E(H)$ and~$E(G)$. 
	
	\begin{clm}\label{clm:46}
		Every copy $S_\star\in \binom HS$ gives rise to an induced copy 
		$F_{S_\star}\in \binom GF$.
	\end{clm}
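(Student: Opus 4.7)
The plan has three ingredients, to be verified in order. First, I would observe that $F_{S_\star}$ is well-defined as a graph on $V(S_\star)$ by applying~\eqref{eq:FS} with the ordering induced by $<_H$, and that every edge of $F_{S_\star}$ is automatically an edge of $G$: if $xz\in E(F_{S_\star})$ then some hyperedge $xyz\in E(S_\star)\subseteq E(H)$ satisfies $x<_H y<_H z$, which is exactly the condition placing $xz$ in~$E(G)$.

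Second, I would check that $F_{S_\star}\cong F$. For this I would appeal to property~\ref{iti:ogrt-noother} of Corollary~\ref{cor:ogrt}: since $\big|\binom{H}{S}\big|=\big|\binom{H}{S}_{\!<}\big|$, the copy $S_\star$ is order isomorphic to $S$. The resulting order-preserving bijection transfers the recipe~\eqref{eq:FS} from $S$ to $S_\star$ verbatim and therefore induces an isomorphism between $F$ and $F_{S_\star}$.

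The main work is the third step: verifying that $F_{S_\star}$ is \emph{induced} in~$G$. I would argue by contradiction, supposing that some pair $x,z\in V(S_\star)$ satisfies $xz\in E(G)\sm E(F_{S_\star})$. By definition of $G$ there exists a (unique, by linearity of $H$) vertex $y$ with $xyz\in E(H)$ and $x<_H y<_H z$; the task is reduced to showing that $y\in V(S_\star)$, for then the fact that $S_\star$ is induced in $H$ forces $xyz\in E(S_\star)$ and thus $xz\in E(F_{S_\star})$, contradicting our assumption.

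To place $y$ inside $V(S_\star)$, I would use property~\ref{iti:ogrt-useful} of Corollary~\ref{cor:ogrt} to pick an ordered copy $S_\star'\in\binom{H}{S}_{\!<}$ containing the hyperedge~$xyz$. If $S_\star=S_\star'$ we are finished immediately, so assume the two copies are distinct. Applying property~\ref{iti:ogrt-forest} to $\ccN=\{S_\star,S_\star'\}$ produces a set $\ccX$ with $|\ccX|\leq|\ccN|-2=0$, whence $\ccX=\vn$ and the pair $\{S_\star,S_\star'\}$ is already a forest of copies in the sense of Definition~\ref{dfn:forcop}. Consequently $V(S_\star)\cap V(S_\star')$ is empty, a singleton, or a common hyperedge of both copies; since it contains $\{x,z\}$ and thus has size at least two, only the third option remains, and by linearity of $H$ this common hyperedge must be exactly $xyz$, so $y\in V(S_\star)$. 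The main conceptual obstacle is precisely the connection made in this paragraph: the structural "forest" condition a priori only controls the shape of small subsystems of copies, but linearity of $H$ promotes a two-vertex intersection of copies into a full hyperedge-intersection, which is exactly what is needed to capture the missing vertex $y$.
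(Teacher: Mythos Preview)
Your proof is correct and follows essentially the same route as the paper: use property~\ref{iti:ogrt-noother} to get $F_{S_\star}\cong F$ as a subgraph of $G$, then for the induced part pick the unique hyperedge $xyz\in E(H)$ witnessing $xz\in E(G)$, invoke property~\ref{iti:ogrt-useful} to place $xyz$ in some ordered copy $S'_\star$, and apply property~\ref{iti:ogrt-forest} to $\{S_\star,S'_\star\}$ so that their intersection, containing the two vertices $x,z$, must be a common hyperedge---necessarily $xyz$ by linearity. Your write-up is in fact slightly more explicit than the paper's (you spell out why $\ccX=\vn$ and why the common hyperedge equals $xyz$), but the argument is identical.
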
 

	\begin{proof}
		By Corollary~\ref{cor:ogrt}\ref{iti:ogrt-noother} we know that $S_\star$
		appears with the correct ordering in $H$ and, therefore, 
		$F_\star=F_{S_\star}$ is a subgraph of $H$ isomorphic to $F$. We need
		to show that this subgraph is induced.
		
		Suppose to this end that $x, z\in V(F_\star)$ are connected by an 
		edge $xz\in E(G)$. This means that for some vertex $y\in V(H)$ 
		with $x<y<z$ there is an edge $xyz\in E(H)$. Since $H$ is linear, 
		it has no other edges containing $x$ and $z$. We need to show that $xyz\in E(S_\star)$.

		By Corollary~\ref{cor:ogrt}\ref{iti:ogrt-useful} there is a
		copy $S_{\star\star}\in \binom HS$ containing the edge $xyz$. 
		If $S_\star=S_{\star\star}$ we are done, so suppose that this is not 
		the case. By Corollary~\ref{cor:ogrt}\ref{iti:ogrt-forest} the set 
		$\{S_\star, S_{\star\star}\}$ is a forest of copies of $S$. As the 
		set $q=V(S_\star)\cap V(S_{\star\star})$ contains $x$ and $z$, this 
		is only possible if $q=xyz$ is an edge of $S_\star$.   
	\end{proof}
	
	This immediately implies that $G$ enjoys the Ramsey property $G\lra(F)_r$. 
	Indeed, any $r$-colouring of~$E(G)$ defines an $r$-colouring of~$E(H)$.
	Some copy $S_\star\in \binom HS$ needs to be monochromatic with respect 
	to this induced colouring and by Claim~\ref{clm:46} we get a monochromatic 
	copy $F_{S_\star}\in \binom GF$.
	
\begin{clm}\label{clm:noforest}
		Every n.n.i.\ copy $F_\star$ of $F$ in $G$
		is of the form $F_{\star}=F_{S_\star}$ for some $S_\star\in\binom{H}{S}_{\!<}$.
\end{clm}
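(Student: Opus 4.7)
My plan is to build a small forest of copies of $S$ that captures all edges of $F_\star$, invoke the $4$-connectedness of $F$ to show that $F_\star$ is confined to a single one of these copies, and then identify that copy as $S_\star$. In detail, by the linearity of $H$, every edge $xz \in E(F_\star)$ has a unique lift $xyz \in E(H)$, and by Corollary~\ref{cor:ogrt}\ref{iti:ogrt-useful} each lift lies in some copy $S^{(xz)} \in \binom{H}{S}_{\!<}$. The set $\ccN = \{S^{(xz)} : xz \in E(F_\star)\}$ has cardinality at most $|E(F)| \le n$, and by Corollary~\ref{cor:ogrt}\ref{iti:ogrt-forest} (when $|\ccN|\ge 2$) extends to a forest of copies $\ccF = \{F_1, \dots, F_t\}$ with $\fH := \bigcup\ccF \subseteq H$. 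Observe that $F_\star$ sits inside the derived graph $F_\fH$: its edges come from the lifts, and its vertices lie in $V(\fH)$ since $F$ has positive minimum degree.

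The key step is to prove $V(F_\star) \subseteq V(F_{i_0})$ for some single $F_{i_0} \in \ccF$, which I plan to do by induction on $t$. Writing $\fH_{t-1} = \bigcup_{j<t}F_j$ and $q_t = V(\fH_{t-1}) \cap V(F_t)$, the forest condition guarantees $|q_t| \le 3$. If there existed $a \in V(F_\star) \cap (V(\fH_{t-1})\sm V(F_t))$ and $b \in V(F_\star) \cap (V(F_t)\sm V(\fH_{t-1}))$, then any $a$-$b$ path in $F_\fH$ would be forced to traverse $q_t$: the first edge on the path that crosses into $V(F_t)$ has both endpoints lying in a single hyperedge from either $\fH_{t-1}$ or $F_t$, which pins the crossing vertex into $V(\fH_{t-1})\cap V(F_t) = q_t$. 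The $4$-connectedness of $F \cong F_\star$ supplies four internally vertex-disjoint $a$-$b$ paths, which would require four distinct internal vertices in the three-element set $q_t$, a contradiction. Hence either $V(F_\star) \subseteq V(\fH_{t-1})$ (so induction on the smaller forest finishes) or $V(F_\star) \subseteq V(F_t)$.

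Given $V(F_\star) \subseteq V(F_{i_0})$, matching cardinalities $|V(F_\star)| = |V(F)| = |V(S)| = |V(F_{i_0})|$ upgrades containment to equality. For each edge $xz \in E(F_\star)$ with lift $xyz \in E(H)$, some $F_k \in \ccF$ contains $xyz$; if $k \ne i_0$, then $\{x,z\} \subseteq V(F_{i_0})\cap V(F_k)$, an intersection that in a forest of copies has at most three vertices and, once its size is at least two, must be exactly the shared hyperedge of the gluing of the later-indexed copy. The linearity of $H$ then identifies this shared hyperedge with $xyz$, placing $y$ in $V(F_{i_0})$ as well. Either way $\{x,y,z\} \subseteq V(F_{i_0})$, and the inducedness of $F_{i_0}$ in $H$ provided by Corollary~\ref{cor:ogrt}\ref{iti:ogrt-noother} yields $xyz \in E(F_{i_0})$, i.e., $xz \in E(F_{F_{i_0}})$. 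Equality of edge counts then forces $E(F_\star) = E(F_{F_{i_0}})$, so setting $S_\star = F_{i_0}$ finishes the proof. The main obstacle I expect is the inductive step: it depends on the $4$-connectedness of $F$ to rule out separation by any three-vertex cut, matched precisely against the at-most-three-vertex gluing sets appearing in the forest of copies.
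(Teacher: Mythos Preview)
Your approach is essentially the paper's: lift each edge of $F_\star$ to its unique triple in $H$, embed these triples in a forest $\ccF$ of ordered copies of $S$ via Corollary~\ref{cor:ogrt}, and use the $4$-connectedness of $F_\star$ against the at-most-three-vertex cuts $q_j$ to trap $V(F_\star)$ inside a single copy.

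Two loose ends deserve attention. First, your induction on $t$ is not quite closed: when $V(F_\star)\subseteq V(\fH_{t-1})$ you apply the hypothesis to $\{F_1,\dots,F_{t-1}\}$, but for the separation argument to go through at the next stage you need $F_\star\subseteq F_{\fH_{t-1}}$, i.e., every lift $xyz$ lies in $E(\fH_{t-1})$. This is true---if $xyz$ lay only in $E(F_t)$ then $x,z\in q_t$, and since $|q_t|\in\{0,1,3\}$ the size-$3$ case forces $xyz=q_t\in E(\fH_{t-1})$ by linearity---but you skip it. The paper sidesteps the issue by taking $t$ minimal: then some lift $T_e\subseteq V(S_t)$ satisfies $T_e\sm q\ne\vn$, and the case analysis on $|q|$ shows $T_e\sm q$ contains a vertex of $e$, placing a vertex of $F_\star$ in $V(S_t)\sm q$; the $4$-connectedness then gives $V(F_\star)=V(S_t)$ directly. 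Second, your assertion that $|V(F_{i_0})\cap V(F_k)|\ge 2$ forces this intersection to be a common hyperedge is correct for forests of linear hypergraphs, but it is not immediate from Definition~\ref{dfn:forcop} and requires its own short inductive argument. The paper avoids this entirely: once $V(F_\star)=V(S_t)$, Claim~\ref{clm:46} says $F_{S_t}$ is induced in $G$, so $E(F_\star)\subseteq E\bigl(G[V(S_t)]\bigr)=E(F_{S_t})$, and equal edge counts finish at once.
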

	\begin{proof}
		For every edge $e=xz$ of $F_\star$ there exists a unique triple $T_e=xyz$ in $E(H)$ and by property~\ref{iti:ogrt-useful}
		of Corollary~\ref{cor:ogrt} the triple $T_e$ is contained in some copy of $S$ in~$H$. Appealing to property~\ref{iti:ogrt-forest}
		of Corollary~\ref{cor:ogrt} we obtain a forest of copies $\ccS=\{S_1,\dots,S_t\}$ of $S$ for some $t\leq 2|E(F)|-2$ 
		with $\bigcup \ccS$ containing all triples $T_e$ for $e\in E(F_\star)$. We may assume 
		that $t$ is minimal with that property. If $t=1$ we are done, so suppose 
		for the sake of contradiction that $t\ge 2$. 
		We set $q=V(S_t)\cap \bigcup_{j<t}V(S_j)$. 
		Due to the minimal choice of $t$, the copy $S_t$ contains at least one of those triples $T_e$ and $T_e\setminus q\neq\emptyset$.
		
		If $|q|\le 1$, then $T_e\setminus q$ contains at least one of the two vertices of $e$ and, 
		since $F_\star$ is~$4$-connected, we arrive at $V(F_\star)=V(S_t)$ and by Claim~\ref{clm:46} 
		we are done.

		In the remaining case $|q|=3$ and $q\in E(H)$. Consequently, the linearity of $H$ implies again $|T_e\cap q|\leq 1$ 
		and the same argument as in the case $|q|\le 1$ yields the claim.
	\end{proof}
	
	Both claims together establish part~\ref{it:14a} of Theorem~\ref{thm:noforest}.
	For part~\ref{it:14b} we consider two copies~$F_\star$,~$F_{\star\star}$ 
	in $G$ sharing an edge $xz\in E(G)$
	with $x<z$. The definition of $G$ yields a vertex~$y$ with $x<y<z$ 
	and $xyz\in E(H)$.
	Moreover, Claim~\ref{clm:noforest} implies 
	$xyz\in E(S_\star)\cap E(S_{\star\star})$ and, hence,~$y$ is a common 
	vertex of~$F_\star$,~$F_{\star\star}$. Finally, $xy, yz\not\in E(G)$ 
	follows from the linearity of $H$. 
\end{proof}

\section{Globality of Ramsey properties and Ramsey infinite graphs}
\label{sec:simple}
In this section we shall prove Theorem~\ref{thm:simple} and Theorem~\ref{thm:Rinf} (see \ssign\ref{sec:simplepf}). 
In those  proofs the $2$-density of a graph will play an important r\^ole and we discuss this parameter first.

\subsection{The 2-density of a graph}
\label{sec:2density}
For every graph $F$ with at least one edge we define its \emph{$2$-density}
\[
	d_2(F)
	=
	\begin{cases}
		\frac{|E(F)|-1}{|V(F)|-2}\,, &\text{if}\ |V(F)|\geq 3\,,\\
		1\,, &\text{if}\ F=K_2\,,
	\end{cases}
\]
and denote by $m_2(F)$ the \emph{maximum $2$-density}
\[
	m_2(F)
	=
	\max\big\{d_2(F')\colon F'\subseteq F \tand e(F')\geq 1\big\}\,.
\]
We say a graph $F$ with at least two edges is \emph{strictly $2$-balanced} if $d_2(F)>d_2(F')$ 
for every proper subgraph $F'\subsetneq F$ with at least one edge. In particular, for any strictly $2$-balanced graph~$F$ we have 
$d_2(F)>1$ and, consequently, $|E(F)|\geq |V(F)|$.

The $2$-density is a well-known parameter in random graph theory, as $p=n^{-1/d_2(F)}$ 
marks the edge probability in $G(n,p)$ where the expected number of copies of $F$ 
is as large as the expected number of edges.

We first observe that building forests of copies does not increase the maximum $2$-density.
\begin{prop}
	\label{prop:m2forest}
	For every nonempty forest of copies $\ccF$ of a graph~$F$ with at least one edge we have 
	$m_2(\fF)=m_2(F)$ for the graph $\fF=\bigcup \ccF$. 
	\end{prop}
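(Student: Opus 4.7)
The plan is to induct on $m=|\ccF|$, using that the bound $m_2(\fF)\geq m_2(F)$ is automatic because $\fF$ contains a copy of~$F$. Setting $M=m_2(F)$, I would verify $d_2(F')\leq M$ for every subgraph $F'\subseteq\fF$ with $e(F')\geq 1$. The base case $m=1$ is tautological. For $m\geq 2$, I enumerate $\ccF=\{F_1,\dots,F_m\}$ as in Definition~\ref{dfn:forcop}, write $\fF^-=\bigcup_{i<m}F_i$, and let $q=V(F_m)\cap V(\fF^-)$, which is empty, a single vertex, or an edge shared by $\fF^-$ and $F_m$; by induction $m_2(\fF^-)=M$.

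Given $F'\subseteq\fF$ with $e(F')\geq 1$, I would decompose it into pieces $F'_-$ and $F'_m$ by restricting its vertex and edge sets to~$\fF^-$ and to~$F_m$ respectively. Since the only edge potentially lying in both $\fF^-$ and~$F_m$ is~$q$, one obtains $E(F')=E(F'_-)\cup E(F'_m)$ with $i_e:=|E(F'_-)\cap E(F'_m)|\in\{0,1\}$ and $i_v:=|V(F'_-)\cap V(F'_m)|\in\{0,1,2\}$. If one of $F'_-$, $F'_m$ is edgeless, then $F'$ is obtained from the other by adjoining isolated vertices, and since such vertices can only decrease $d_2$ the desired bound follows immediately from the induction hypothesis (when the nontrivial piece lies in $\fF^-$) or from $F'_m\subseteq F_m\cong F$.

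In the main case both $F'_-$ and $F'_m$ carry edges, so $\rho(F'_-),\rho(F'_m)\geq 0$, where the defect
\[
	\rho(H)=M\bl|V(H)|-2\br-\bl|E(H)|-1\br
\]
satisfies $\rho(H)\geq 0\Longleftrightarrow d_2(H)\leq M$. A routine inclusion--exclusion yields
\[
	\rho(F')=\rho(F'_-)+\rho(F'_m)+M(2-i_v)+i_e-1,
\]
which, using $M\geq 1$, is nonnegative whenever $i_v\leq 1$ or $i_e=1$.

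The one real obstacle is the configuration $i_v=2$, $i_e=0$, which occurs precisely when $q$ is an edge of $\fF^-\cap F_m$ whose two endpoints both lie in $V(F')$ but which is itself missing from $E(F')$. My plan is to resolve it by passing to $F''=F'\cup\{q\}\subseteq\fF$: since $V(F'')=V(F')$ and $|E(F'')|=|E(F')|+1$, one has $\rho(F')=\rho(F'')+1$, while the inclusion--exclusion applied to $F''$ now falls into the subcase $i_e=1$ and yields $\rho(F'')\geq 0$. Hence $\rho(F')\geq 1$ and the induction closes.
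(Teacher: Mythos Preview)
Your proof is correct and follows essentially the same route as the paper's: induction on $|\ccF|$, decomposition of an arbitrary subgraph along the last attachment, and an inclusion--exclusion computation of the defect $\rho$. The only cosmetic difference is that the paper disposes of your problematic case $i_v=2$, $i_e=0$ by observing at the outset that one may assume the subgraph $J$ is induced in $\fF$ (since passing to the induced subgraph on $V(J)$ can only increase $d_2$), which forces $q\in E(J)$ whenever both endpoints of $q$ lie in $V(J)$; your trick of adjoining the edge $q$ to form $F''$ is precisely this same observation applied locally.
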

\begin{proof}
	The statement is obvious if $\ccF$ contains only one copy of $F$ and the general case 
	follows from a simple inductive argument on the number of copies in $\ccF$.
	
	For some $\l>1$ let $\ccF=\{F_1,\dots,F_\l\}$ be a forest of copies of $F$ and set 
	\[
		q=V(F_\l)\cap \bigcup_{i<\l}V(F_i)\,.
	\]
	Recall that the set $q$ contains at most two vertices and if $|q|=2$, then it spans an edge from the set $E(F_\l)\cap  \bigcup_{i<\l}E(F_i)$.
	 
	We shall show that $d_2(J)\leq m_2(F)$ for every subgraph $J\subseteq\bigcup\ccF$.
	Owing to the induction hypothesis, it suffices to consider such a graph $J$ with both subgraphs 
	$J_1=J\cap \bigcup_{i<\l}F_i$ and $J_2=J\cap F_\l$ containing at least one edge.
	We set $q_J=q\cap V(J)$. Note that without loss of generality we may assume that $J$ is an induced subgraph and, hence, if $|q_J|=|q|=2$, then 
	the edge spanned by $q_J$ is also contained in $J$. Let $\zeta\in\{0,1\}$ indicate if $q_J$ spans an edge. We have 
	\[
		1-\zeta\le (2-|q_J|)\cdot m_2(F)
	\]
	because if $\zeta=0$, then $|q_J|\le 1\le m_2(F)$. 
	For $j\in\{1,2\}$ the estimate $d_2(J_j)\leq m_2(F)$
	yields
	\[
		|E(J_j)|-1 \leq \big(|V(J_j)|-2\big)\cdot m_2(F)\,.
	\]
	By adding all three inequalities we infer 
	\[
		|E(J_1)|+|E(J_2)|-1-\zeta
		\le 
		\bigl(|V(J_1)|+|V(J_2)|-2-|q_J|\bigr)\cdot m_2(F)\,,
	\]
	i.e., $|E(J)|-1\le \bigl(|V(J)|-2\bigr)\cdot m_2(F)$, whence $d_2(J)\le m_2(F)$.
\end{proof}

For strictly $2$-balanced graphs similar calculations yield the following relation.

\begin{prop}\label{prop:25connected}
		For any union $H$ of two distinct copies of a strictly $2$-balanced graph~$F$, which intersect in at least two edges, 
		we have $d_2(H)>d_2(F)$.
\end{prop}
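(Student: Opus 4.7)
The plan is to write $H=F_1\cup F_2$ for the two distinct copies of~$F$ and to set $J=F_1\cap F_2$. The first step is to observe that $J$ is a proper subgraph of~$F$: since $F_1$ and $F_2$ are distinct, either $V(F_1)\ne V(F_2)$, or else $V(F_1)=V(F_2)$ but $E(F_1)\ne E(F_2)$, and in either case the intersection, viewed via an identification with $F_1$, cannot equal $F$ itself. By hypothesis $J$ contains at least two edges, so in particular $|V(J)|\geq 3$, and strict $2$-balancedness of $F$ therefore yields
\[
	d_2(J)<d_2(F)\,.
\]

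Next I would carry out the inclusion-exclusion computation. Writing $A=|E(F)|-1$, $B=|V(F)|-2$, $a=|E(J)|-1$, $b=|V(J)|-2$, so that $d_2(F)=A/B$ and $d_2(J)=a/b$, the identities $|V(H)|=2|V(F)|-|V(J)|$ and $|E(H)|=2|E(F)|-|E(J)|$ translate into $|E(H)|-1=2A-a$ and $|V(H)|-2=2B-b>0$. Consequently, the target inequality $d_2(H)>d_2(F)$ cross-multiplies to $B(2A-a)>A(2B-b)$, which simplifies to $bA>aB$; this is exactly the strict inequality $d_2(J)<d_2(F)$ already at our disposal.

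I do not anticipate a substantive obstacle. The only point requiring a moment of care is verifying that $J\subsetneq F$ in the degenerate case $V(F_1)=V(F_2)$, which is immediate from $E(F_1)\ne E(F_2)$. Conceptually the argument parallels Proposition~\ref{prop:m2forest}, but the strict inequality is now forced by strict $2$-balancedness of $F$ combined with the fact that we compare against $F$ itself rather than against a possibly denser subgraph.
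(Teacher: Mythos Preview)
Your proposal is correct and follows essentially the same approach as the paper: both identify the intersection $J=F_1\cap F_2$ as a proper subgraph of~$F$ with at least two edges, invoke strict $2$-balancedness to get $d_2(J)<d_2(F)$, and combine this with inclusion--exclusion on $|V(H)|$ and $|E(H)|$ to conclude. Your version is slightly more explicit in justifying $J\subsetneq F$ (which the paper leaves implicit), and you phrase the final step as a cross-multiplication whereas the paper subtracts the inequality $|E(J)|-1<(|V(J)|-2)\,d_2(F)$ from $2\bigl(|E(F)|-1\bigr)=2\bigl(|V(F)|-2\bigr)\,d_2(F)$; these are the same computation.
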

\begin{proof}
Consider a graph $H=F_1\cup F_2$ being the union of  two copies $F_1$ and $F_2$ of $F$. 
We denote by  $I=\big(V(F_1)\cap V(F_2), E(F_1)\cap E(F_2)\big)$ the intersection of both copies and our assumption 
yields $|E(I)|\geq 2$ and $|V(I)|\geq 3$.
Since $F$ is strictly $2$-balanced, we have $d_2(I)<d_2(F)$, whence 
\[
	|E(I)|-1<\bigl(|V(I)|-2\bigr)\cdot d_2(F)\,.
\]
By subtracting this from $2\bigr(|E(F)|-1\bigr)=2\bigl(|V(F)|-2\bigr)\cdot d_2(F)$
we obtain 
\begin{align*}
	|E(H)|-1
	&=
	2|E(F)|-|E(I)|-1
	>
	\bigr(2|V(F)|-|V(I)|-2\bigr)\cdot d_2(F)\\
	&=
	\bigl(|V(H)|-2\bigr)\cdot d_2(F)\,,
\end{align*}
which implies $d_2(H)>d_2(F)$.
\end{proof}

We complement Proposition~\ref{prop:m2forest} with the observation that certain 
`cycles of copies' 
have larger $2$-density.

\begin{prop}
	\label{prop:m2scycle}
	Let $F$ be a graph with $d_2(F)>1$. Suppose that for some $\ell\ge 3$ 
	we have a cyclic sequence $F_1, \dots, F_\ell$ of copies of $F$ such that 
	no edge belongs to three of these copies and for all distinct $i, j\in \ZZ/\ell\ZZ$
	we have 
	\[
		|E(F_i)\cap E(F_j)|=
		\begin{cases}
			1 & \text{ if } j=i\pm 1 \\
			0 & \text{ else}\,.
		\end{cases}
	\]
	Then the union $\fC=\bigcup_{i\in \ZZ/\ell\ZZ} F_i$ satisfies $d_2(\fC)>d_2(F)$.
\end{prop}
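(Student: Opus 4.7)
My plan is to peel off one copy, say $F_\ell$, and compare $\fC$ to the smaller graph $\fF'=F_1\cup\dots\cup F_{\ell-1}$. Write $v=|V(F)|$ and $e=|E(F)|$, noting that $d_2(F)>1$ forces $v\ge 3$. Two exact counts come for free from the edge hypotheses: $|E(\fF')|=(\ell-1)(e-1)+1$ and $|E(\fC)|=\ell(e-1)$, by inclusion--exclusion exploiting that no edge lies in three copies.

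The main point is the vertex count. Iterating the trivial bound $|V(A\cup B)|\le |V(A)|+|V(B)|-2$ along the path $F_1,\dots,F_{\ell-1}$, in each step of which the new copy shares at least two vertices with the running union, gives $|V(\fF')|\le (\ell-1)(v-2)+2$. When closing the cycle, $F_\ell$ meets $\fF'$ in two \emph{distinct} edges (one shared with $F_{\ell-1}$, one with $F_1$; they cannot coincide, else that single edge would lie in three of the copies, violating the hypothesis). Since two distinct edges together span at least three vertices, the closing intersection $q=|V(F_\ell)\cap V(\fF')|$ is at least $3$, whence
\[
	|V(\fC)|=|V(\fF')|+v-q\le (\ell-1)(v-2)+2+v-3=\ell(v-2)+1\,.
\]

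Plugging $|E(\fC)|=\ell(e-1)$ and this vertex bound into the definition of $d_2(\fC)$ and cross-multiplying the target inequality $d_2(\fC)>d_2(F)$ collapses everything to $e-1>v-2$, which is exactly the hypothesis $d_2(F)>1$. The main---and essentially only---obstacle lies in the vertex count: the naive bound $|V(\fC)|\le \ell(v-2)$ is actually false in general (it can fail by $1$, for instance when several shared edges are glued at a common vertex), and it is the extra slack $q\ge 3$ that compensates for this and drives the inequality.
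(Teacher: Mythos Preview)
Your proof is correct and follows essentially the same approach as the paper: both arguments rest on the bounds $|E(\fC)|=\ell(e-1)$ and $|V(\fC)|\le \ell(v-2)+1$, after which the inequality $d_2(\fC)>d_2(F)$ reduces to $e-1>v-2$, i.e., to $d_2(F)>1$. The only difference is presentational: the paper states the vertex bound without further comment, whereas you justify it carefully by peeling off $F_\ell$ and observing that the two distinct closing edges force $|V(F_\ell)\cap V(\fF')|\ge 3$.
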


\begin{proof}
	Notice that $|V(\fC)|\le \ell(|V(F)|-2)+1$ 
	and $|E(\fC)|=\ell(|E(F)|-1)$. So by subtracting $d_2(F)>1$ from 
	$\ell(|V(F)|-2)\cdot d_2(F)=\ell(|E(F)|-1)$
	we obtain
	\[
		(|V(\fC)|-2)\cdot d_2(F)<|E(\fC)|-1\,,
	\]
	whence $d_2(F)<d_2(\fC)$.
\end{proof}

The following observation appeared in slightly stronger form in the work of R\"odl and Ruci\'nski~\cite{RR93} 
and it establishes a link between the $2$-density of $F$ and the $2$-density of its Ramsey graphs.

\begin{prop}
	\label{prop:m2Ramsey}
	For every graph $F$ containing a cycle and every graph $G$ satisfying the Ramsey property $G\nilra (F)_2$ 
	we have $m_2(G)>m_2(F)$.
\end{prop}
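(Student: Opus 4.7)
The plan is to argue the contrapositive: assuming $m_2(G)\le m_2(F)$, I will construct a $2$-colouring $\chi\colon E(G)\to\{0,1\}$ that leaves no monochromatic copy of $F$ in $G$. Let $F^\star\subseteq F$ be a strictly $2$-balanced subgraph attaining $d_2(F^\star)=m_2(F)$. Since $F$ contains a cycle we have $m_2(F)>1$, so $F^\star$ itself contains a cycle and in particular $|E(F^\star)|\ge|V(F^\star)|\ge3$. Because every monochromatic copy of $F$ in $G$ contains a monochromatic copy of $F^\star$, it suffices to produce $\chi$ so that no copy of $F^\star$ is monochromatic.

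Next, I would extract structural constraints on the family $\ccF$ of all copies of $F^\star$ in $G$ from the assumption $m_2(G)\le d_2(F^\star)$. Two members of $\ccF$ meeting in at least two edges would, by Proposition~\ref{prop:25connected}, produce a subgraph of $G$ with $2$-density strictly above $d_2(F^\star)$, contradicting the hypothesis; hence any two distinct copies in $\ccF$ share at most one edge of $G$. Analogously, Proposition~\ref{prop:m2scycle} forbids the existence in $G$ of any cyclic sequence $F_1^\star,\dots,F_\ell^\star$ of copies of $F^\star$ of the ``clean'' form described there (consecutive copies meeting in a single edge, non-consecutive copies edge-disjoint, and no edge lying in three of them).

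Equipped with these constraints, I would consider the linear $|E(F^\star)|$-uniform hypergraph $\ccH$ on vertex set $E(G)$ whose hyperedges are the edge sets of the members of $\ccF$; a valid $\chi$ is precisely a proper $2$-colouring of $\ccH$. My method is iterative peeling: locate a hyperedge $K_0$ of the current hypergraph that contains a vertex (an edge of $G$) belonging to no other hyperedge, remove $K_0$, inductively $2$-colour the remainder, and finally choose the colour of the exposed private vertex so as to break monochromaticity of $K_0$. The whole argument then reduces to showing that any nonempty subhypergraph $\ccH'\subseteq\ccH$ inheriting linearity and the forbidden-cycle condition contains such a peelable hyperedge.

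The main obstacle will be this peeling lemma, because triple overlaps at a common edge (three copies of $F^\star$ sharing one edge of $G$) are density-neutral and therefore not ruled out by Propositions~\ref{prop:25connected} or~\ref{prop:m2scycle}. I plan to handle this via a double counting on the subgraph $\fF'\subseteq G$ associated with a hypothetical stuck $\ccH'$: assuming every edge of $\fF'$ lies in at least two members of the corresponding family $\ccF'$, carefully accounting for shared edges and vertices of consecutive and nonconsecutive copies forces the bound $d_2(\fF')>d_2(F^\star)=m_2(F)$, contradicting $m_2(G)\le m_2(F)$ and thereby completing the proof.
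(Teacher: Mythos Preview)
Your outline matches the paper's proof almost exactly: reduce to a strictly $2$-balanced $F^\star\subseteq F$ with $d_2(F^\star)=m_2(F)$, build the $|E(F^\star)|$-uniform hypergraph on vertex set $E(G)$ whose hyperedges are the copies of $F^\star$, infer linearity from Proposition~\ref{prop:25connected}, and then $2$-colour it. The paper compresses the finish into the assertion that this hypergraph ``is linear and contains no cycles'' and hence admits a $2$-colouring without monochromatic hyperedges; your peeling argument is the standard way to unpack that sentence.

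Where you go wrong is in diagnosing the ``main obstacle.'' Triple overlaps are harmless, and no separate double-counting is needed. The observation you are missing is that a \emph{minimal} Berge cycle in a linear hypergraph automatically satisfies the hypotheses of Proposition~\ref{prop:m2scycle}: linearity makes consecutive hyperedges meet in exactly one vertex; a common vertex between non-consecutive hyperedges would allow one to shortcut to a shorter Berge cycle; and a vertex lying in three of the hyperedges would, for $\ell\ge4$, place it in two non-consecutive ones, while for $\ell=3$ linearity would force it to coincide with each of the three distinct pivot vertices. Hence Proposition~\ref{prop:m2scycle} actually excludes \emph{all} Berge cycles from your $\ccH$, not just the overtly clean ones. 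A linear hypergraph with no Berge cycles and all hyperedges of size at least two always has a vertex of degree one (otherwise a greedy walk alternating fresh vertices and hyperedges must eventually repeat a vertex and close a Berge cycle), so the peeling succeeds directly. Three copies of $F^\star$ through one edge of $G$ form a sunflower, not a cycle, and each petal supplies private vertices to peel from. The direct density computation you propose for a stuck $\ccH'$ is both unnecessary and, as stated, too vague to be convincing.
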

\begin{proof}
	Since any given graph $F$ containing a cycle satisfies $m_2(F)>1$, there exists a strictly $2$-balanced subgraph $\Fs\subseteq F$ 
	with $d_2(\Fs)=m_2(F)$ and we fix such an $\Fs$.
	
	Assume for the sake of contradiction that $m_2(G)\leq m_2(F)=d_2(\Fs)$. 
	This implies that all copies of $\Fs$ in $G$ are induced. 
	We consider the auxiliary 
	$|E(\Fs)|$-uniform hypergraph~$A$ with vertex set $V(A)=E(G)$ whose hyperedges 
	correspond to the copies of $\Fs$ in $G$.
	In view of Propositions~\ref{prop:25connected} and~\ref{prop:m2scycle}, 
	the hypergraph $A$ is linear and contains no cycles.
	Consequently,~$A$ admits a $2$-colouring of its vertices without monochromatic 
	hyperedges. However, this colouring signifies $G\ninlra(\Fs)_2$, which is absurd.
\end{proof}

Together with Proposition~\ref{prop:m2forest} this has the 
following $m_2$-free consequence. 

\begin{cor}\label{cor:56}
	For every graph $F$ containing a cycle every forest $\ccF$ of copies 
	of~$F$ satisfies $\bigcup\ccF\ninlra(F)_2$. \qed
\end{cor}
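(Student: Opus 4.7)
The plan is to derive Corollary~\ref{cor:56} directly by contradiction from the two preceding propositions about the $2$-density, without any further combinatorial work. Namely, suppose toward a contradiction that there is a forest of copies $\ccF$ of $F$ such that the graph $\fF=\bigcup\ccF$ satisfies $\fF\nilra(F)_2$. Since $F$ contains a cycle, Proposition~\ref{prop:m2Ramsey} applies and yields the strict inequality $m_2(\fF)>m_2(F)$. On the other hand, Proposition~\ref{prop:m2forest} asserts that $m_2(\fF)=m_2(F)$, and these two statements are incompatible.

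The only subtle point is the degenerate case where $\ccF$ is empty or consists of a single copy with no edges, but the hypothesis that $F$ contains a cycle ensures $|E(F)|\ge 1$, so Proposition~\ref{prop:m2forest} is applicable to any nonempty $\ccF$, and the case $\ccF=\vn$ is vacuous since then $\bigcup\ccF$ has no edges and trivially fails to arrow $F$. Hence there is no genuine obstacle, and the whole argument fits into a few lines.
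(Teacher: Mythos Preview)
Your argument is correct and is exactly the proof the paper intends: the corollary is stated with a \qed immediately after the remark that it follows from combining Proposition~\ref{prop:m2forest} with Proposition~\ref{prop:m2Ramsey}, and you have simply spelled out that two-line contradiction. The edge case you flag (empty $\ccF$) is harmless for the reason you give.
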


\subsection{Proofs of the theorems}
\label{sec:simplepf}
In this section we deduce Theorem~\ref{thm:simple} and Theorem~\ref{thm:Rinf}. Both 
results are consequences of the girth Ramsey theorem
combined with 
Corollary~\ref{cor:56}.

\begin{proof}[Proof of Theorem~\ref{thm:simple}]
	Given a graph $F$ that is not a forest and integers $r\geq2$ and $n\geq 1$ 
	let the graph $G$ and a system of copies~$\ccG$ be provided by Theorem~\ref{thm:grt} 
	applied with $F$, $r$, and $\l=\binom{n}{2}$. Without loss of generality we may assume $G=\bigcup\ccG$
	and obviously  $G$ enjoys the Ramsey property $G\lra(F)_r$. 
	
	Let $H\subseteq G$ be a subgraph on $n$ vertices and for every edge $e$ of $H$ fix a copy $F_e$ of~$F$ from the system 
	$\ccG$ 
	containing~$e$. Theorem~\ref{thm:grt} tells us that the set $\{F_e\colon e\in E(H)\}$ is contained in a forest 
	$\ccF$ of copies of~$F$ and by definition we have $H\subseteq \bigcup\ccF$. On the other hand,  
Corollary~\ref{cor:56}
implies $\bigcup\ccF\ninlra (F)_2$.
\end{proof}

\begin{proof}[Proof of Theorem~\ref{thm:Rinf}]
	Suppose there are only finitely many minimal Ramsey graphs for a given graph $F$ 
	containing a cycle and that $n\in\NN$ exceeds the number of vertices of each 
	such graph. By Theorem~\ref{thm:simple} applied to $F$, $r=2$, and $n$ there 
	is a Ramsey graph $G\lra (F)_2$ containing none of the minimal Ramsey graphs 
	for $F$, which is absurd. 
\end{proof}

\section{Ramsey graphs with large free subgraphs}
\label{sec:antipisier}
We deduce Theorem~\ref{thm:antipisier} from the girth Ramsey theorem in form of Corollary~\ref{cor:ogrt}.
\begin{proof}[Proof of Theorem~\ref{thm:antipisier}]
	Let $P=x_0\dots x_\l$ be a longest path in~$F$ and fix some total order~$<_F$ of the vertex set of $F$ such that the path is monotone $x_0<_F\dots<_F x_\l$. 
	We shall show that for every integer $r\ge 2$ the ordered Ramsey graph $G$ 
	provided by Corollary~\ref{cor:ogrt} applied to~$F$,~$r$, and~$n=1$ has the 
	desired property. 
	
	Let $H\subseteq G$ be an arbitrary subgraph of~$G$. By a standard averaging 
	argument we obtain an $\l$-partite subgraph $H_0$ with vertex partition 
	$V_1\dcup\dots\dcup V_{\l}$ such that 
	\[
		|E(H_0)|
		\geq 
		\Big(1-\frac{1}{\l}\Big)|E(H)|\,.
	\]
	We partition the edge set of $H_0$ into two classes depending on whether 
	the vertex ordering~$<_G$ induced by $G$
	complies with the labelling of the vertex classes of~$H_0$ or not, i.e., 
	we consider \[
		\vec E
		=
		\{xy\in E(H_0)\colon x<_Gy \tand x\in V_i, y\in V_j\ \text{for some}\ i<j \}
		\qand
		\lvec E
		=
		E(H_0)\setminus \vec E\,.
	\]
Note that neither the edges of $\vec E$ nor the edges of $\lvec E$ can form a monotone path 
	with~$\l+1$ vertices. However, owing to property~\ref{iti:ogrt-noother} of Corollary~\ref{cor:ogrt},
	every copy of $F$ in $G$ must contain such a monotone path on $\l+1$ vertices. Consequently, both 
	subgraphs $(V(H),\vec E)$ and $(V(H),\lvec E)$ are $F$-free and one of them 
	contains at least 
	\[
		\frac{1}{2}
		|E(H_0)|
		\geq
		\frac{\l-1}{2\l}|E(H)|
	\]
	edges and may serve as the promised subgraph $H'\subseteq H$.
\end{proof}

\subsection*{Acknowledgement} We would like to thank Jonathan Tidor for interesting
conversations on Theorems~\ref{thm:forcing} and~\ref{thm:noforest}, which led to a
simplification in~\ssign\ref{sec:noforest}.

\begin{bibdiv}
\begin{biblist}

\bib{DE51}{article}{
   author={de Bruijn, N. G.},
   author={Erd\H{o}s, P.},
   title={A colour problem for infinite graphs and a problem in the theory
   of relations},
   note={Nederl. Akad. Wetensch. Proc. Ser. A {\bf 54}},
   journal={Indag. Math.},
   volume={13},
   date={1951},
   pages={369--373},
   review={\MR{46630}},
}

\bib{BEL76}{article}{
   author={Burr, S. A.},
   author={Erd\H{o}s, P.},
   author={Lovasz, L.},
   title={On graphs of Ramsey type},
   journal={Ars Combin.},
   volume={1},
   date={1976},
   number={1},
   pages={167--190},
   issn={0381-7032},
   review={\MR{419285}},
}

\bib{DHKZ24}{article}{
   author={Diskin, Sahar},
   author={Hoshen, Ilay},
   author={Krivelevich, Michael},
   author={Zhukovskii, Maksim},
   title={On vertex Ramsey graphs with forbidden subgraphs},
   journal={Discrete Math.},
   volume={347},
   date={2024},
   number={3},
   pages={Paper No. 113806, 5},
   issn={0012-365X},
   review={\MR{4670485}},
   doi={10.1016/j.disc.2023.113806},
}

\bib{Er59}{article}{
   author={Erd\H{o}s, P.},
   title={Graph theory and probability},
   journal={Canadian J. Math.},
   volume={11},
   date={1959},
   pages={34--38},
   issn={0008-414X},
   review={\MR{102081}},
   doi={10.4153/CJM-1959-003-9},
}

\bib{ENR90}{article}{
   author={Erd\H{o}s, Paul},
   author={Ne\v set\v ril, Jaroslav},
   author={R\"{o}dl, Vojt\v{e}ch},
   title={On Pisier type problems and results (combinatorial applications to
   number theory)},
   conference={
      title={Mathematics of Ramsey theory},
   },
   book={
      series={Algorithms Combin.},
      volume={5},
      publisher={Springer, Berlin},
   },
   date={1990},
   pages={214--231},
   review={\MR{1083603}},
   doi={10.1007/978-3-642-72905-8\_15},
}

\bib{Fa91}{article}{
   author={Faudree, Ralph},
   title={Ramsey minimal graphs for forests},
   journal={Ars Combin.},
   volume={31},
   date={1991},
   pages={117--124},
   issn={0381-7032},
   review={\MR{1110225}},
}

\bib{Lu94}{article}{
   author={\L uczak, Tomasz},
   title={On Ramsey minimal graphs},
   journal={Electron. J. Combin.},
   volume={1},
   date={1994},
   pages={Research Paper 4, approx. 4},
   review={\MR{1269165}},
   doi={10.37236/1184},
}

\bib{NR76}{article}{
   author={Ne\v{s}et\v{r}il, J.},
   author={R\"{o}dl, V.},
   title={Partitions of vertices},
   journal={Comment. Math. Univ. Carolinae},
   volume={17},
   date={1976},
   number={1},
   pages={85--95},
   issn={0010-2628},
   review={\MR{412044}},
}

\bib{NR78}{article}{
   author={Ne\v{s}et\v{r}il, J.},
   author={R\"{o}dl, V.},
   title={The structure of critical Ramsey graphs},
   journal={Acta Math. Acad. Sci. Hungar.},
   volume={32},
   date={1978},
   number={3-4},
   pages={295--300},
   issn={0001-5954},
   review={\MR{512405}},
   doi={10.1007/BF01902367},
}

\bib{RR}{article}{
	author={Reiher, Chr.},
	author={R\"odl, V.},
	title={The girth Ramsey theorem},
	eprint={2308.15589},
}

\bib{RR93}{article}{
   author={R\"{o}dl, V.},
   author={Ruci\'{n}ski, A.},
   title={Lower bounds on probability thresholds for Ramsey properties},
   conference={
      title={Combinatorics, Paul Erd\H{o}s is eighty, Vol. 1},
   },
   book={
      series={Bolyai Soc. Math. Stud.},
      publisher={J\'{a}nos Bolyai Math. Soc., Budapest},
   },
   date={1993},
   pages={317--346},
   review={\MR{1249720}},
}

\bib{RR95}{article}{
   author={R\"{o}dl, V.},
   author={Ruci\'{n}ski, A.},
   title={Threshold functions for Ramsey properties},
   journal={J. Amer. Math. Soc.},
   volume={8},
   date={1995},
   number={4},
   pages={917--942},
   issn={0894-0347},
   review={\MR{1276825}},
   doi={10.2307/2152833},
}

\end{biblist}
\end{bibdiv}

\end{document}